\newif\ifJOURNAL\global\JOURNALfalse
\newif\ifHYPER\global\HYPERtrue
\newif\ifINTERNAL\global\INTERNALfalse
\definecolor{ks-green}{rgb}{0.0,0.7,0.0}
\definecolor{ks-red}{rgb}{0.7,0.0,0.0}
\definecolor{ks-blue}{rgb}{0.0,0.0,0.7}
\numberwithin{equation}{section}
\theoremstyle{plain}
\newtheorem{theorem}[equation]{Theorem}
\newtheorem{proposition}[equation]{Proposition}
\newtheorem{algorithm}[equation]{Algorithm}
\theoremstyle{definition}
\newtheorem{definition}[equation]{Definition}
\newtheorem{Example}[equation]{Example}
\newtheorem{Remark}[equation]{Remark}
\newenvironment{remark}{\emph{Remark.}}{}
\newenvironment{notation}{\emph{Notation.}}{}
\newcommand{\lnum}[1]{\makebox[0.5\mathindent][r]{\textnormal{\footnotesize{#1}}}}
\newenvironment{algtest}{\bgroup\bfseries\upshape
  \tabbing
    \hspace*{\mathindent}\=
    \hspace*{1.5em}\=\hspace*{1.5em}\=%
    \hspace*{1.5em}\=\hspace*{1.5em}\= \kill \>}{%
  \endtabbing\egroup}
\def\revddots{\mathinner{\mkern1mu\raise1pt\vbox{\kern7pt\hbox{.}}\mkern2mu
  \raise4pt\hbox{.}\mkern2mu\raise7pt\hbox{.}\mkern1mu}}
\newcommand{\block}[1]{\underline{#1}}
\newcommand{\tsfrac}[2]{\textstyle{\frac{#1}{#2}}}
\newcommand{\tabstrut}{\rule[-0.5ex]{0pt}{2.8ex}}
\newcommand{\mthstrut}{\rule[-0.5ex]{0pt}{2.2ex}}
\newcommand\trp{^{\!\top}}
\newcommand\inv{^{-1}}
\newcommand\numN{\mathbb{N}}
\newcommand\numZ{\mathbb{Z}}
\newcommand\numQ{\mathbb{Q}}
\newcommand\numR{\mathbb{R}}
\newcommand\numC{\mathbb{C}}
\newcommand{\alphabet}[1]{\mathcal{#1}}
\newcommand{\freeALG}[2]{#1\langle #2\rangle}
\newcommand{\field}[1]{\mathbb{#1}}
\newcommand{\als}[1]{\mathcal{#1}}
\newcommand{\complexity}{\mathcal{O}}
\newcommand{\aclo}[1]{\overline{#1}}
\newcommand{\length}[1]{\vert #1 \vert}
\DeclareMathOperator{\rank}{rank}
\DeclareMathOperator{\linsp}{span}
\begin{document}
\title{Horner Systems: How to efficiently evaluate\\
       non-commutative polynomials (by matrices)}
\author{Konrad Schrempf%
  \footnote{Contact: math@versibilitas.at (Konrad Schrempf),
    \url{https://orcid.org/0000-0001-8509-009X},
    Uni\-ver\-si\-tät Wien, Fakultät für Mathematik,
    Oskar-Morgenstern-Platz~1, 1090 Wien;
    FH Ober\-öster\-reich, Forschungsgruppe ASiC, 
    Ringstraße~43a, 4600 Wels; Austria.
    }
    \hspace{0.2em}\href{https://orcid.org/0000-0001-8509-009X}{%
    \includegraphics[height=10pt]{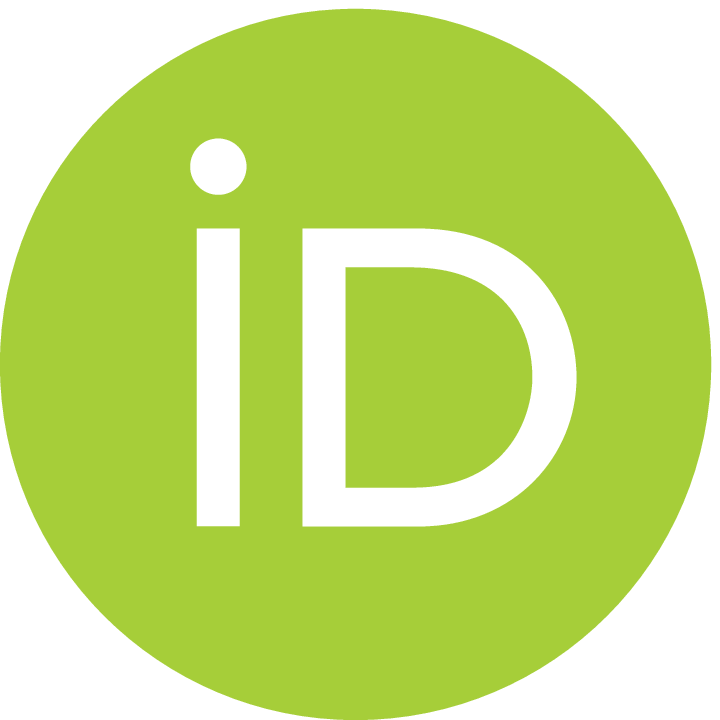}}
  }

\maketitle

\begin{abstract}
By viewing non-commutative polynomials, that is,
elements in \emph{free associative algebras}, in terms of
\emph{linear representations}, we generalize Horner's rule
to the non-commutative (multivariate) setting.
We introduce the concept of \emph{Horner systems}
(which has parallels to that of \emph{companion matrices}),
discuss their construction and
show how they enable the efficient evaluation of
non-commutative polynomials by matrices.
\end{abstract}

\medskip
\emph{Keywords and 2010 Mathematics Subject Classification.}
Horner's rule, free associative algebra, minimal linear representations,
admissible linear systems, matrix polynomials, companion matrix,
non-commutative factorization;
Primary 68W30;
Secondary 16Z05, 47A56

\section*{Introduction}

When we talk about the evaluation of non-commutative (nc) polynomials
by matrices, we actually take elements in the
\emph{free associative algebra}, aka ``algebra of non-commu\-ta\-tive polynomials'',
(over a \emph{commutative} field $\field{K}$,
e.g.~$\numQ$ or $\numC$,
and an alphabet $\alphabet{X}$ with $d$ letters) and view them as
functions on $d$-tuples of matrices (of appropriate sizes);
the non-commuting letters $x_1, x_2, \ldots, x_d$ 
(or $x,y,z$ for $d=3$) are
``placeholders'' where we plug in matrices $\bar{X}_1, \bar{X}_2, \ldots, \bar{X}_d$
(respectively $\bar{X},\bar{Y},\bar{Z}$).

Working symbolically with matrices (witout inverse) just means that we
add or multiply nc polynomials, that is, use the
ring operations in free associative algebras (over an
appropriate alphabet); usually in terms of (finite) formal
sums of words with coefficients in a \emph{commutative} field
$\field{K}$, 
for example $(x^2 + \frac{1}{2}xy) - (xy + 2y^2) = x^2 -\frac{1}{2}xy - 2y^2$
or $x\cdot(1-yx) = x-xyx$.

Another ---at a first glance much more complicated--- way to work
with nc polynomials is in terms of \emph{linear representations}
in the sense of Cohn and Reutenauer
\cite{Cohn1994a}
. Here a polynomial $p$ is written as $p = u A\inv v$
with $u\trp,v \in \field{K}^{n \times 1}$ and upper unitriangular
(with ones in the diagonal)
$n\times n$ matrix $A$
over \emph{linear} nc polynomials, for example
\begin{align*}
p = x - xyx &=
\begin{bmatrix}
1 & . & . & . 
\end{bmatrix}
\begin{bmatrix}
1 & -x & . & -x \\
. & 1 & y & . \\
. & . & 1 & -x \\
. & . & . & 1
\end{bmatrix}
\inv
\begin{bmatrix}
. \\ . \\ . \\ 1
\end{bmatrix}\\
&= 
\begin{bmatrix}
1 & . & . & . 
\end{bmatrix}
\begin{bmatrix}
1 & x & -xy & x-xyx \\
. & 1 & -y & -yx \\
. & . & 1 & x \\
. & . & . & 1
\end{bmatrix}
\begin{bmatrix}
. \\ . \\ . \\ 1
\end{bmatrix}
\end{align*}
(zero entries are replaced by lower dots to emphasize the structure).
The triple $\pi = (u,A,v)$ is called \emph{linear representation} of $p$,
the size of $A$ \emph{dimension}. If the dimension is the smallest possible
(for $p$), then $\pi$ is called \emph{minimal}.
Addition and multiplication can easily be formulated in terms
of linear representations
(discussed in detail in Section~\ref{sec:hs.faa}).
Furthermore, \emph{minimal} linear representations
can be used to factorize nc polynomials
(Section~\ref{sec:hs.mf}),
and from another point of view they are the natural generalization
of \emph{companion matrices}
(Section~\ref{sec:hs.hs}).

\begin{remark}
Here we restrict ourselves to the very special case of
nc polynomials. Linear representations in the sense of
Cohn and Reutenauer go far beyond, namely for elements
in a \emph{free field}
\cite{Amitsur1966a}
, that is, 
the \emph{universal field of fractions} of a
\emph{free associative algebra}
\cite[Chapter~7]{Cohn2006a}
. 
For a practical introduction see
\cite{Schrempf2018c}
, for the computation of the left gcd of two nc polynomials
\cite[Example~5.4]{Schrempf2018a2}
.
\end{remark}

\medskip
In other words: Linear representations are a powerful and
universal language in the context of (symbolic) non-commutative
rational expressions. For $u=[1,0,\ldots,0]$ we call
$\pi=(u,A,v)$ an \emph{admissible linear system} (ALS) for $p$
and write $\als{A}=\pi$ also
as $A s = v$. Then $p$ is the first component
of the (unique) solution vector $s$. Evaluating $p$ in terms of an
ALS by matrices is immediate: We start with $s_n = v_n$ and
compute $s_k$ for $k=n-1,\ldots,1$. Thus we do not need to
invert $A$ at all.

\begin{remark}
The term ``admissible'' means that the system matrix $A$
is invertible, that is, $A s = v$ admits a \emph{unique}
solution.
In our case $A$ is invertible over the
free associative algebra.
In general however, $A$ ``just'' needs to be invertible
over the \emph{free field}. Although this can be ensured by
a rather simple algebraic property it goes deep into the
heart of Cohn's theory and is very subtle and 
difficult to understand. (This is the actual reason to
restrict to the special case of nc polynomials.)
\end{remark}

\medskip
So, if we want to evaluate our polynomial $p = p(x,y)$
from before with $m\times m$ matrices $\bar{X},\bar{Y}$
we have
$s_4 = I_m$,
$s_3 = \bar{X} s_4 = \bar{X}$,
$s_2 = -\bar{Y} s_3 = -\bar{Y}\bar{X}$ and
$s_1 = \bar{X}s_2 + \bar{X}s_4 = -\bar{X}\bar{Y}\bar{X} + \bar{X}$.
Two matrix-matrix multiplications of complexity
$\complexity(m^3)$ are necessary,
one to compute $s_2$ and one to compute $p = s_1$.
The multiplication is the dominating part since the addition of matrices
has only complexity $\complexity(m^2)$.
In this case we did not gain anything by using linear representations
since plugging in $\bar{X},\bar{Y}$ directly into the
words $x$ and $xyx$ from $p$ would also ``cost''
two multiplications.

\begin{remark}
We only assume that the multiplication is the dominating part,
that is, its complexity is $\complexity(m^{2+\varepsilon})$
for $\varepsilon > 0$;
recall that Strassen's algorithm has $\complexity(m^{2.81})$
\cite{Strassen1969a}
.
And since we are interested in
\emph{practical} applications, (numerical) stability is
important. For details and references (including the complexity
of the matrix multiplication) we refer to 
\cite{Demmel2007a}
.
\end{remark}

\medskip
Now we take the polynomial
$p = 3 c y x b + 3 x b y x b + 2 c y x a x + c y b x b
   - c y a x b  - 2 x b y x a x + 4 x b y b x b  - 3 x b y a x b
   + 3 x a x y x b  - 3 b x b y x b + 6 a x b y x b + 2 x a x y x a x
   + x a x y b x b  - x a x y a x b  - 2 b x b y x a x  - b x b y b x b
   + b x b y a x b + 5 a x b y b x b - 4 a x b y a x b$
\cite[Section~8.2]{Camino2006a}
. (Here $a$, $b$ and $c$ can be viewed as matrix-valued parameters.)
If we want to evaluate $p = p(x,y,z;a,b,c)$ by matrices,
97~multiplications are necessary.
By rewriting $p$ as ``Sylvester mapping'' with respect to $y$,
that is $p = p_1 y q_1 + p_2 y q_2 + \ldots + p_k y q_k$,
the number of multiplications can be reduced to 28
\cite{Camino2006a}
. However, only $6+2+7=15$ multiplications
(left part, inner part, right part)
are necessary using the ``matrix-fac\-to\-ri\-zation''
\begin{displaymath}
\left(
\begin{bmatrix}
. & c 
\end{bmatrix}
+
\begin{bmatrix}
1+a & 1+b & x
\end{bmatrix}
\begin{bmatrix}
x & . & . \\
. & x & . \\
. & . & a 
\end{bmatrix}
\begin{bmatrix}
b & . \\
. & -b \\
. & x 
\end{bmatrix}
\right)
\begin{bmatrix}
y & . \\
. & y 
\end{bmatrix}
\begin{bmatrix}
6+5b-4a & . \\
3+b-a & 2x 
\end{bmatrix}
\begin{bmatrix}
. & x \\
a & . 
\end{bmatrix}
\begin{bmatrix}
x \\ b
\end{bmatrix}.
\end{displaymath}
How to find such factorizations is discussed
in Section~\ref{sec:hs.mf}
and summarized in Section~\ref{sec:hs.epi}.
Factorizations are important steps towards
``Horner systems'' which are ---roughly speaking---
the most sparse admissible linear systems (for a given
polynomial).

\begin{remark}
For Camino, Helton and Skelton 
\cite{Camino2006a}
\ the crucial point is to find the \emph{Sylvester index}
\cite{Konstantinov2000a}
, that is, the minimal number of ``summands''
(here it is~$k=2$ with respect to $y$),
to solve the generalized Sylvester equation.
Using three terms (instead of two) makes
a significant difference since no $\complexity(m^3)$
algorithm is known in the general case and the simple approach using
tensor product requires $\complexity(m^6)$
\cite{Simoncini2016a}
. See also \cite[Section~7.3]{Higham2008a}
.
\end{remark}

\medskip
In Section~\ref{sec:hs.faa} we give a brief introduction to
free associative algebras and set up the necessary formalism
to work with linear representations.
The main contribution is the concept of \emph{Horner systems}
(and bounds for the number of multiplications
in Proposition~\ref{pro:hs.complexity})
in Section~\ref{sec:hs.hs}.
From a practical point of view the \emph{minimization}
of linear representations (which we recall at the end of
Section~\ref{sec:hs.faa})
and the factorization into matrices in Section~\ref{sec:hs.mf}
are important since they are the major steps in the
construction of Horner systems.
And finally, in Section~\ref{sec:hs.epi},
we summarize how to construct Horner systems
and state some related literature.

\medskip
To get a first impression, one can start with Table~\ref{tab:hs.count}
(page~\pageref{tab:hs.count}). While the number of words ---and thus
the number of multiplications--- can grow exponentially, the number of
multiplications using Horner systems is at most quadratic with respect
to the \emph{rank} (Definition~\ref{def:hs.rep}), which is a good
``measure'' for the complexity of a nc polynomial.
(In the univariate case, the rank is just the degree plus one.
Notice however, that the rank function is \emph{not} a degree function.)

\medskip
\begin{notation}
The set of the natural numbers is denoted by $\numN = \{ 1,2,\ldots \}$.
Zero entries in matrices are usually replaced by (lower) dots
to emphasize the structure of the non-zero entries
unless they result from transformations where there
were possibly non-zero entries before.
We denote by $I_n$ the identity matrix
of size $n$ respectively $I$ if the size is clear from the context.
\end{notation}

\section{Free Associative Algebras}\label{sec:hs.faa}

After briefly introducing the ``algebra of nc polynomials''
and the notion of \emph{irreducible} polynomials (needed for the factorization),
we provide a detailed lead-in to the work with \emph{linear representations}
in the context of nc polynomials, companioned by examples.
At the end of this section we summarize the necessary setup
and present the algorithm for the \emph{minimization}.
For the factorization we refer
to Section~\ref{sec:hs.mf}

\medskip

Let $\field{K}$ be a \emph{commutative} field
(e.g.~$\numQ$, $\numR$ or $\numC$) and
$\alphabet{X} = \{ x_1, x_2, \ldots, x_d\}$ be a \emph{finite} (non-empty) alphabet.
The \emph{free monoid} $\alphabet{X}^*$ 
is the set of all \emph{finite words}
$x_{i_1} x_{i_2} \cdots x_{i_n}$ with $i_k \in \{ 1,2,\ldots, d \}$,
for example (for $\alphabet{X} = \{ x, y, z \}$),
\begin{displaymath}
\alphabet{X}^* = \{ 1, x, y, z, x^2, xy, xz, yx, y^2, yz, zx, zy, z^2, x^3, x^2y, \ldots \}.
\end{displaymath}
The multiplication on $\alphabet{X}^*$ is the \emph{concatenation},
that is,
$(x_{i_1} \cdots x_{i_m})\cdot (x_{j_1} \cdots x_{j_n})
= x_{i_1} \cdots x_{i_m} x_{j_1} \cdots x_{j_n}$,
with neutral element $1$, the \emph{empty word}.
The \emph{length} of a word $w=x_{i_1} x_{i_2} \cdots x_{i_m}$ is
(denoted by) $\length{w}=m$.
For an introduction see
\cite[Chapter~1]{Berstel2011a}
.

By $\freeALG{\field{K}}{\alphabet{X}}$ we denote the
\emph{free associative algebra} or \emph{free $\field{K}$-algebra}
(aka ``algebra of nc polynomials'').
Its elements can be uniquely expressed in the form
$\sum_{w \in \alphabet{X}^*} \kappa_w w$, $\kappa_w \in \field{K}$
(only finitely many $\kappa_w$ are non-zero),
that is, by \emph{finite} formal sums.
In the case of $\alphabet{X} = \{ x \}$,
the free associative algebra is just the polynomial ring $\field{K}[x]$.
Given two elements $p = \sum \kappa_w w$ and $q = \sum \lambda_w w$,
the \emph{sum} and the \emph{product} are given by
\begin{displaymath}
p + q = \sum_{w \in \alphabet{X}^*} (\kappa_w + \lambda_w) w
\quad\text{resp.}\quad
pq = \sum_{w \in \alphabet{X}^*} 
   \left( \sum_{uv=w} \kappa_u \lambda_v \right) w.
\end{displaymath}
A very rich resource on free associative algebras is
\cite{Cohn1974a}
. For their role in the theory of formal languages
we recommend 
\cite{Cohn1975b}
\ and 
\cite{Berstel2011a}
\ or
\cite{Salomaa1978a}
.

\medskip
For detailed algebraic discussions a lot of definitions (and
notations) are necessary. Therefore we formulate most as
a special case and refer to
\cite{Cohn1994a,Cohn1999a}
\ for linear representations and
\cite{Baeth2015a}
\ for the factorization
for further information and literature.
The \emph{factorization} in free associative algebras is a
natural generalization of that in the (ring of) integers $\numZ$.
However, in the non-commutative setting one needs to
distinguish between \emph{prime elements} (for divisibility) and
\emph{irreducible elements} or \emph{atoms} (for factorization).
(And the \emph{uniqueness} of a factorization into atoms needs
a generalization
\cite{Cohn1963b}
.)  The \emph{number} of atoms is unique,
for example $x - xyx = x(1-yx) = (1-xy)x$.

\begin{definition}[Irreducible Polynomials]\label{def:hs.irrpoly}
A (non-trivial) polynomial $p \in \freeALG{\field{K}}{\alphabet{X}} \setminus \field{K}$,
that is, a non-zero non-invertible element,
is called an \emph{atom} (or \emph{irreducible})
if $p = q_1 q_2$ implies that either $q_1 \in \field{K}$ or $q_2 \in \field{K}$,
that is, one of the factors is \emph{invertible}.
(Invertible elements are also called \emph{units}.)
\end{definition}

Now we go over to \emph{linear representations} of
elements in free associative algebras and formulate the
ring operations (sum and product) and
the factorization on that level.
There are two main issues we need to take care of:
\begin{itemize}
\item Does every polynomial admit a linear representation?
\item And, how can we construct \emph{minimal} linear representations?
\end{itemize}
Both can be addressed in a constructive way. We start
with ``minimal monomials'' (Proposition~\ref{pro:hs.minmon}),
add or multiply them (Proposition~\ref{pro:hs.ratop})
and minimize (Algorithm~\ref{alg:hs.minals}).
We illustrate these steps using $p=x$ and $q=1-yx$
with ``manual'' minimization to avoid a lot of technical details
(necessary for an implementation in computer algebra systems).

\begin{definition}[Linear Representations, Dimension, Rank
\cite{Cohn1994a,Cohn1999a}
]\label{def:hs.rep}
Let $p \in \freeALG{\field{K}}{\alphabet{X}}$.
A \emph{linear representation} of $p$ is a triple $\pi = (u,A,v)$ with
$u\trp,v \in \field{K}^{n \times 1}$
(for some $n \in \numN$)
and an over $\freeALG{\field{K}}{\alphabet{X}}$ invertible
$n \times n$ matrix $A = (a_{ij})$ with entries
$a_{ij} = \kappa_{ij}^{(0)} + \kappa_{ij}^{(1)} x_1 + \ldots + \kappa_{ij}^{(d)} x_d$,
$\kappa_{ij}^{(\ell)} \in \field{K}$,
and $p = u A\inv v$.
The \emph{dimension} of $\pi$ is $\dim \, (u,A,v) = n$.
It is called \emph{minimal} if $A$ has the smallest possible dimension
among all linear representations of $p$.
The ``empty'' representation $\pi = (,,)$ is
the minimal one of $0 \in \freeALG{\field{K}}{\alphabet{X}}$
with $\dim \pi = 0$.
Let $p \in \freeALG{\field{K}}{\alphabet{X}}$
and $\pi$ be a \emph{minimal} linear representation of $p$.
Then the \emph{rank} of $p$ is
defined as $\rank p = \dim \pi$.
\end{definition}

\begin{definition}[Left and Right Families
\cite{Cohn1994a}
]\label{def:hs.family}
Let $\pi=(u,A,v)$ be a linear representation of
$p \in \freeALG{\field{K}}{\alphabet{X}}$
of dimension $n$.
The families $( s_1, s_2, \ldots, s_n )\subseteq \freeALG{\field{K}}{\alphabet{X}}$
with $s_i = (A\inv v)_i$
and $( t_1, t_2, \ldots, t_n )\subseteq \freeALG{\field{K}}{\alphabet{X}}$
with $t_j = (u A\inv)_j$
are called \emph{left family} and \emph{right family} respectively.
$L(\pi) = \linsp \{ s_1, s_2, \ldots, s_n \}$ and
$R(\pi) = \linsp \{ t_1, t_2, \ldots, t_n \}$
denote their linear spans (over $\field{K}$).
\end{definition}

\begin{proposition}[%
\protect{\cite[Proposition~4.7]{Cohn1994a}
}]\label{pro:hs.cohn94.47}
A representation $\pi=(u,A,v)$ of an element $p \in \freeALG{\field{K}}{\alphabet{X}}$
is minimal if and only if both, the left family
and the right family, are $\field{K}$-linearly independent.
In this case, $L(\pi)$ and $R(\pi)$ depend only on $p$.
\end{proposition}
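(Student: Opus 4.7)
My plan is to combine two $\field{K}$-linear ``gauge moves'' on representations with a direct-sum/reduction argument: the ``only if'' direction is handled by an iterated reduction, while the ``if'' direction together with the uniqueness follows from a dimension count in the reduction of a joint representation. For $n\times n$ invertible scalar matrices $T,S\in GL_n(\field{K})$, the two gauge moves are $(u,A,v)\mapsto(u,TA,Tv)$ and $(u,A,v)\mapsto(uS,AS,v)$: both yield representations of the same polynomial $p=uA\inv v$, preserve the affine-linear form of the entries of $A$, and act on the families by $s\mapsto S\inv s$ respectively $t\mapsto tT\inv$ while fixing the other family; consequently linear independence of either family and the spans $L(\pi),R(\pi)$ themselves are invariants of the respective orbits.

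For the ``only if'' direction I argue by contraposition. Given a nontrivial relation $\sum\lambda_is_i=0$, a right gauge $S$ whose $n$-th row of $S\inv$ equals $\lambda$ transforms the left family so that its last coordinate vanishes. Block-partitioning the transformed system
\begin{displaymath}
\begin{pmatrix}A_{11}&a\\ \beta&\gamma\end{pmatrix}\begin{pmatrix}\tilde s\\ 0\end{pmatrix}=\begin{pmatrix}\tilde v\\ v_n\end{pmatrix}
\end{displaymath}
yields $A_{11}\tilde s=\tilde v$ on the top and $p=\tilde u\tilde s$. A further left gauge $T$ is used to clear the last row and column so that $A_{11}$ is invertible over $\freeALG{\field{K}}{\alphabet{X}}$, producing $(\tilde u,A_{11},\tilde v)$ as a representation of $p$ of dimension $n-1$. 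The dependent-right-family case is symmetric. Hence minimality forces both families to be linearly independent.

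For the converse and the uniqueness I use a direct-sum/counting argument. Let $\pi_1$ have both families linearly independent, and let $\pi_2$ be any representation of $p$ of dimension $n_2$. The direct sum $\pi=\pi_1\oplus(-\pi_2)=\bigl((u_1,-u_2),\mathrm{diag}(A_1,A_2),(v_1,v_2)\trp\bigr)$ represents $p-p=0$, and the zero polynomial has minimal dimension $0$; iterating the reduction from the first direction therefore brings $\pi$ to the empty representation in exactly $n_1+n_2$ steps. Each \emph{left reduction} preserves $L$ and decreases $\dim R$ by at most~$1$, while each \emph{right reduction} preserves $R$ and decreases $\dim L$ by at most~$1$. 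Since $\dim L(\pi)\geq\dim L(\pi_1)=n_1$ must drop to $0$ under the right reductions, their number $b$ satisfies $b\geq n_1$, and symmetrically the number $a$ of left reductions satisfies $a\geq n_1$; from $a+b=n_1+n_2$ we conclude $n_2\geq n_1$, establishing minimality of $\pi_1$. When $\pi_2$ is also minimal, these inequalities become equalities and force $\dim L(\pi)=n_1=\dim L(\pi_1)$, whence $L(\pi_2)\subseteq L(\pi)=L(\pi_1)$; symmetry gives $L(\pi_1)=L(\pi_2)$, and the same count for $R$ gives $R(\pi_1)=R(\pi_2)$.

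The main obstacle is the reduction step: producing $s_n=0$ by a right gauge $S$ alone does not guarantee that the top-left block $A_{11}$ is invertible over $\freeALG{\field{K}}{\alphabet{X}}$. Constructing the auxiliary left gauge $T$ that decouples the last row and column using only $\field{K}$-scalar operations relies on the affine-linear form of $A$ together with the hypothesis that $A\inv$ lies in $\freeALG{\field{K}}{\alphabet{X}}^{n\times n}$ (so that the constant part $A(0)$ is invertible over $\field{K}$, permitting a scalar row normalisation to $A(0)=I_n$); verifying this carefully is the technical heart of the argument.
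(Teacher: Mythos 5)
First, a point of order: the paper does not prove this statement at all --- it is quoted from Cohn and Reutenauer --- so there is no in-paper argument to compare yours against. Judged on its own, your skeleton (scalar gauge moves, reduction of a dependent family, and the direct-sum/counting argument for the converse and for the uniqueness of $L(\pi)$ and $R(\pi)$) is the right classical strategy, but the two steps that carry all the weight are not actually established.

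The first is the reduction step, which you yourself flag as the technical heart. Producing $s_n=0$ by a right gauge and deleting row/column~$n$ gives $A_{11}\tilde s=\tilde v$, but to obtain a representation you need $A_{11}$ invertible over $\freeALG{\field{K}}{\alphabet{X}}$, and your proposed fix does not achieve this. Normalising $A(0)=I_n$ is harmless, but invertibility of $A$ over the free algebra is then equivalent to simultaneous strict triangularizability of the coefficient matrices of $A-I_n$ (all long products must vanish), and a leading principal submatrix need not inherit this after an arbitrary scalar change of basis: for instance the linear part $\left[\begin{smallmatrix}0&x&0\\-y&0&y\\0&x&0\end{smallmatrix}\right]$ is nilpotent while its upper-left $2\times2$ block is not, so the corresponding $A_{11}$ has inverse $1+xy+xyxy+\cdots$-type entries and is \emph{not} invertible over the free algebra. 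A scalar left gauge cannot in general ``clear the last row and column'' either, since that would require adding non-scalar multiples of rows. Making the deletion admissible is precisely what the minimization machinery of Definition~\ref{def:hs.meqn} and Algorithm~\ref{alg:hs.minals} is for: one works in an upper unitriangular polynomial form and removes a row/column only when the equations $\mathcal{L}_k(T,U)=0$ or $\mathcal{R}_k(T,U)=0$ are solvable; mere linear dependence of the family at an arbitrary position does not by itself license deleting that position. The second gap is the pair of counting lemmas ``a left reduction preserves $L$ and decreases $\dim R$ by at most one'' and its mirror. These are asserted, not proved, and they are not obvious: after deleting row/column~$n$ the new right family is $\tilde u A_{11}^{-1}=\tilde t+t_n\beta A_{11}^{-1}$, which is not a subfamily of the old one, so the containment and codimension claims need an argument. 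Until both points are supplied this is a plausible outline rather than a proof; the cleanest repairs are either to work throughout with the polynomial-ALS normal form used in this paper, or to invoke Cohn's theory of full and hollow matrices as Cohn and Reutenauer do.
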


\begin{definition}[Admissible Linear Systems and Transformations
\cite{Schrempf2017a9}
]\label{def:hs.als}
A linear representation $\als{A} = (u,A,v)$ of $p \in \freeALG{\field{K}}{\alphabet{X}}$
is called \emph{admissible linear system} (ALS) for $p$,
written also as $A s = v$,
if $u=e_1=[1,0,\ldots,0]$.
The element $p$ is then the first component
of the (unique) solution vector $s$.
Given a linear representation $\als{A} = (u,A,v)$
of dimension $n$ of $p \in \freeALG{\field{K}}{\alphabet{X}}$
and invertible matrices $P,Q \in \field{K}^{n\times n}$,
the transformed $P\als{A}Q = (uQ, PAQ, Pv)$ is
again a linear representation (of $p$).
If $\als{A}$ is an ALS,
the transformation $(P,Q)$ is called
\emph{admissible} if the first row of $Q$ is $e_1 = [1,0,\ldots,0]$.
\end{definition}

\begin{remark}
The left family $(A\inv v)_i$ (respectively the right family $(u A\inv)_j$)
and the solution vector $s$ of $As = v$ (respectively $t$ of $u = tA$)
are used synonymously.
\end{remark}

\begin{remark}
Transformations can be done by elementary row and column operations.
However, we are not allowed to scale the first column or add a multiple of
it to other columns (because this would change the first entry in
the left family).
\end{remark}

\begin{Example}\label{ex:hs.defals}
A \emph{minimal} admissible linear system
for $p = x$ is given by 
\begin{displaymath}
\als{A}_p = 
\left(
\begin{bmatrix}
1 & . 
\end{bmatrix},
\begin{bmatrix}
1 & -x \\
. & 1
\end{bmatrix},
\begin{bmatrix}
. \\ 1
\end{bmatrix}
\right).
\end{displaymath}
The left family is $s = (x, 1)$,
the right family is $t = (1,x)$.

\noindent
A \emph{minimal} admissible linear system
for $q = 1-yx$ is given by
\begin{displaymath}
\als{A}_q = 
\left(
\begin{bmatrix}
1 & . & . 
\end{bmatrix},
\begin{bmatrix}
1 & y & -1 \\
. & 1 & -x \\
. & . & 1
\end{bmatrix},
\begin{bmatrix}
. \\ . \\ 1
\end{bmatrix}
\right).
\end{displaymath}
The left family is $s = (1,x,1-yx)$,
the right family is $t = (1, -y, 1-yx)$.
\end{Example}

\begin{definition}[Polynomial ALS and Transformation
\protect{\cite[Definition~24]{Schrempf2017b9}
}]\label{def:hs.pals}
An ALS $\als{A} = (u,A,v)$ of dimension $n$
with system matrix $A = (a_{ij})$
for a non-zero element $p \in \freeALG{\field{K}}{\alphabet{X}} \setminus \{ 0 \}$ 
is called \emph{polynomial} ALS, if
\begin{itemize}
\item[(1)] $v = [0,\ldots,0,\lambda]\trp$ for some $\lambda \in\field{K}$ and
\item[(2)] $a_{ii}=1$ for $i=1,2,\ldots, n$ and $a_{ij}=0$ for $i>j$,
  that is, $A$ is upper triangular.
\end{itemize}
A polynomial ALS is also written as $\als{A} = (1,A,\lambda)$
with $1,\lambda \in \field{K}$.
An admissible transformation $(P,Q)$
for a polynomial ALS $\als{A}$
is called \emph{polynomial} if it has the form
\begin{displaymath}
(P,Q) = \left(
\begin{bmatrix}
1 & \alpha_{1,2} & \ldots & \alpha_{1,n-1} & \alpha_{1,n} \\
  & \ddots & \ddots & \vdots & \vdots \\
  &   & 1 & \alpha_{n-2,n-1} & \alpha_{n-2,n} \\
  &   &   & 1 & \alpha_{n-1,n} \\
  &   &   &   & 1
\end{bmatrix},
\begin{bmatrix}
1 & 0 & 0 & \ldots & 0 \\
  & 1 & \beta_{2,3} & \ldots & \beta_{2,n} \\
  &   & 1 & \ddots & \vdots  \\
  &   &   & \ddots & \beta_{n-1,n} \\
  &   &   &   & 1 \\
\end{bmatrix}
\right).
\end{displaymath}
If additionally $\alpha_{1,n} = \alpha_{2,n} = \ldots = \alpha_{n-1,n} = 0$
then $(P,Q)$ is called \emph{polynomial factorization transformation}.
Later we need also more general 
transformations~\eqref{eqn:hs.gft}.
\end{definition}

\begin{proposition}[Minimal Monomial
\protect{\cite[Proposition~4.1]{Schrempf2017a9}
}]\label{pro:hs.minmon}
Let $k \in \numN$ and $p= x_{i_1} x_{i_2} \cdots x_{i_k}$ be a monomial in
$\freeALG{\field{K}}{\alphabet{X}}$.
Then
\begin{displaymath}
\als{A} = \left(
\begin{bmatrix}
1 & 0  & \cdots & 0
\end{bmatrix},
\begin{bmatrix}
1 & -x_{i_1} \\
  & 1 & -x_{i_2} \\
  & & \ddots & \ddots \\
  & & & 1 & -x_{i_k} \\
  & & & & 1
\end{bmatrix},
\begin{bmatrix}
0 \\ 0 \\ \vdots \\ 0 \\ 1
\end{bmatrix}
\right)
\end{displaymath}
is a \emph{minimal} polynomial ALS of dimension $\dim \als{A} = k+1$.
\end{proposition}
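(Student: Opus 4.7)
My plan is to verify the three claims in order: that $\als{A}$ is a polynomial ALS, that it represents $p$, and that it is minimal. The dimension assertion is then a direct read-off.

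First I would confirm admissibility and the polynomial shape: $u=e_1$, the matrix $A$ is upper unitriangular (hence invertible over $\freeALG{\field{K}}{\alphabet{X}}$), and $v=[0,\ldots,0,1]\trp$, so the conditions of Definition~\ref{def:hs.pals} are met. To see that $\als{A}$ represents $p$, I would solve $As=v$ by back-substitution from row $k+1$ upward. Row $k+1$ gives $s_{k+1}=1$, and row $j$ (for $j\le k$) reads $s_j - x_{i_j}s_{j+1}=0$. By induction this yields
\begin{displaymath}
s_j = x_{i_j} x_{i_{j+1}} \cdots x_{i_k} \quad (1 \le j \le k), \qquad s_{k+1}=1,
\end{displaymath}
and in particular $s_1 = x_{i_1}x_{i_2}\cdots x_{i_k} = p$, so $u A\inv v = e_1\trp s = p$.

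Next I would compute the right family from $t A = u = e_1$ by forward-substitution. Column $1$ gives $t_1=1$; column $j\ge 2$ gives $t_j - t_{j-1}x_{i_{j-1}} = 0$, whence
\begin{displaymath}
t_j = x_{i_1} x_{i_2} \cdots x_{i_{j-1}} \quad (1 \le j \le k+1).
\end{displaymath}
Now the main point: the left family consists of the $k+1$ monomials of lengths $k,k-1,\ldots,1,0$ (namely the tail subwords of $p$ together with $1$), and the right family consists of the $k+1$ monomials of lengths $0,1,\ldots,k$ (the head subwords of $p$ together with $1$). In each case the entries are \emph{pairwise distinct} elements of the free monoid $\alphabet{X}^*$, which is a $\field{K}$-basis of $\freeALG{\field{K}}{\alphabet{X}}$; hence both families are $\field{K}$-linearly independent.

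By Proposition~\ref{pro:hs.cohn94.47}, this simultaneous linear independence characterises minimality, so $\als{A}$ is minimal; its dimension is $k+1$ by construction, which also identifies $\rank p = k+1$. The only thing that requires care is the bookkeeping of indices in the two substitutions, and the use of the fact that monomials of distinct lengths are automatically distinct words—there is no subtle algebraic obstacle.
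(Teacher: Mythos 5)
Your proof is correct and is exactly the standard argument: the paper itself gives no proof here (it cites Proposition~4.1 of the referenced work), but your route — back-substitution for the left family of suffixes, forward-substitution for the right family of prefixes, linear independence because distinct words of $\alphabet{X}^*$ form a $\field{K}$-basis, and then minimality via Proposition~\ref{pro:hs.cohn94.47} — is precisely how this is established. No gaps; the index bookkeeping and the sign conventions in the two substitutions all check out.
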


\begin{proposition}[Rational Operations
\cite{Cohn1999a}
]\label{pro:hs.ratop}
Let $0\neq p,q \in \freeALG{\field{K}}{\alphabet{X}}$ be given by the
admissible linear systems $\als{A}_p = (u_p, A_p, v_p)$
and $\als{A}_q = (u_q, A_q, v_q)$ respectively.
Then an ALS for the sum $p + q$ is given by
\begin{displaymath}
\als{A}_p + \als{A}_q =
\left(
\begin{bmatrix}
u_p & . 
\end{bmatrix},
\begin{bmatrix}
A_p & -A_p u_p\trp u_q \\
. & A_q
\end{bmatrix}, 
\begin{bmatrix} v_p \\ v_q \end{bmatrix}
\right).
\end{displaymath}
And an ALS for the product $fg$ is given by
\begin{displaymath}
\als{A}_p \cdot \als{A}_q =
\left(
\begin{bmatrix}
u_p & . 
\end{bmatrix},
\begin{bmatrix}
A_p & -v_p u_q \\
. & A_q
\end{bmatrix},
\begin{bmatrix}
. \\ v_q
\end{bmatrix}
\right).
\end{displaymath}
\end{proposition}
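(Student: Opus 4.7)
The plan is to verify both constructions by a direct block-matrix computation. The key tool is the inverse formula for an upper block triangular matrix: if $A_p$ and $A_q$ are invertible over $\freeALG{\field{K}}{\alphabet{X}}$, then
\begin{displaymath}
M = \begin{bmatrix} A_p & B \\ . & A_q \end{bmatrix}
\text{ is invertible with }
M\inv = \begin{bmatrix} A_p\inv & -A_p\inv B A_q\inv \\ . & A_q\inv \end{bmatrix},
\end{displaymath}
as one checks by multiplying $M M\inv = I$. This already establishes that the combined system matrix in each of the two proposed constructions is invertible over $\freeALG{\field{K}}{\alphabet{X}}$.

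For the sum I would substitute $B = -A_p u_p\trp u_q$, so that the $(1,2)$-block of $M\inv$ simplifies to $u_p\trp u_q A_q\inv$; the prefactor $A_p$ is placed precisely so that the $A_p\inv$ cancels. Multiplying out yields
\begin{displaymath}
\begin{bmatrix} u_p & . \end{bmatrix} M\inv \begin{bmatrix} v_p \\ v_q \end{bmatrix}
= u_p A_p\inv v_p + (u_p u_p\trp)\, u_q A_q\inv v_q.
\end{displaymath}
Since $u_p = e_1 = [1, 0, \ldots, 0]$, the scalar $u_p u_p\trp$ equals $1$, and the right-hand side collapses to $p + q$, as required.

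For the product, the substitution $B = -v_p u_q$ makes the $(1,2)$-block of $M\inv$ equal $A_p\inv v_p u_q A_q\inv$, and therefore
\begin{displaymath}
\begin{bmatrix} u_p & . \end{bmatrix} M\inv \begin{bmatrix} . \\ v_q \end{bmatrix}
= u_p A_p\inv v_p \cdot u_q A_q\inv v_q = p q.
\end{displaymath}
Admissibility is automatic in both cases: the combined row vector is $[u_p,\, 0,\ldots,0]$, which equals $e_1$ because $u_p = e_1$.

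There is no real obstacle beyond careful bookkeeping of row and column dimensions in the block products; the entire content of the proof is the clever choice of off-diagonal coupling---$-A_p u_p\trp u_q$ for the sum and $-v_p u_q$ for the product---so that either an $A_p$-cancellation (sum) or the natural $v_p \cdot u_q$ pairing (product) leaves precisely the target polynomial after block expansion.
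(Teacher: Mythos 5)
Your verification is correct: the block upper-triangular inverse formula, the cancellation $-A_p\inv(-A_p u_p\trp u_q)A_q\inv = u_p\trp u_q A_q\inv$ together with $u_p u_p\trp = 1$ for the sum, and the pairing $u_p A_p\inv v_p \cdot u_q A_q\inv v_q$ for the product all check out, as does admissibility of the combined $u$-vector. The paper itself gives no proof (it cites Cohn), and your direct computation is exactly the standard argument behind the cited result, so there is nothing to add beyond noting that the off-diagonal blocks indeed have entries of the required affine-linear form ($-A_p u_p\trp u_q$ is the outer product of the first column of $A_p$ with $e_1$, and $-v_p u_q$ is scalar).
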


\medskip
\begin{Example}
An ALS for $h_1 = p + q$ from Example~\ref{ex:hs.defals} is
\begin{displaymath}
\begin{bmatrix}
1 & -x & -1 & . & . \\
. & 1 & . & . & . \\
. & . & 1 & y & -1 \\
. & . & . & 1 & -x \\
. & . & . & . & 1
\end{bmatrix}
s = 
\begin{bmatrix}
. \\ 1 \\ . \\ . \\ 1
\end{bmatrix},
\quad
s = 
\begin{bmatrix}
1 + x - yx \\ 1 \\ 1-yx \\ x \\ 1
\end{bmatrix}.
\end{displaymath}
If we add row~3 to row~1 we get
\begin{displaymath}
\begin{bmatrix}
1 & -x & 0 & y & -1 \\
. & 1 & . & . & . \\
. & . & 1 & y & -1 \\
. & . & . & 1 & -x \\
. & . & . & . & 1
\end{bmatrix}
s = 
\begin{bmatrix}
. \\ 1 \\ . \\ . \\ 1
\end{bmatrix},
\quad
s = 
\begin{bmatrix}
1 + x - yx \\ 1 \\ 1-yx \\ x \\ 1
\end{bmatrix}
\end{displaymath}
and can remove row/column~3 because the corresponding
column equation reads $t_3 = 0$ (recall that $u_j=0$
for $j\ge 2$ in an ALS):
\begin{displaymath}
\begin{bmatrix}
1 & -x & y & -1 \\
. & 1 & . & . \\
. & . & 1 & -x \\
. & . & . & 1
\end{bmatrix}
s = 
\begin{bmatrix}
. \\ 1 \\ . \\ 1
\end{bmatrix},
\quad
s = 
\begin{bmatrix}
1 + x - yx \\ 1 \\ x \\ 1
\end{bmatrix}.
\end{displaymath}
Now we can subtract row~4 from row~2 and add column~2 to column~4
(which results in subtracting $s_4$ from $s_2$):
\begin{displaymath}
\begin{bmatrix}
1 & -x & y & -1-x \\
. & 1 & . & 0 \\
. & . & 1 & -x \\
. & . & . & 1
\end{bmatrix}
s = 
\begin{bmatrix}
. \\ 0 \\ . \\ 1
\end{bmatrix},
\quad
s = 
\begin{bmatrix}
1 + x - yx \\ 0 \\ x \\ 1
\end{bmatrix}.
\end{displaymath}
Removing row/column~2 yields a \emph{minimal} ALS
(of dimension~3). Thus $\rank(p+q) = 3$.
An ALS for $h_2 = pq$ is
\begin{displaymath}
\begin{bmatrix}
1 & -x & . & . & . \\
. & 1 & -1 & . & . \\
. & . & 1 & y & -1 \\
. & . & . & 1 & -x \\
. & . & . & . & 1
\end{bmatrix}
s =
\begin{bmatrix}
. \\ . \\ . \\ . \\ 1
\end{bmatrix},
\quad
s =
\begin{bmatrix}
x(1-yx) \\ 1-yx \\ 1-yx \\ x \\ 1
\end{bmatrix}.
\end{displaymath}
Since $p$ and $q$ are given by \emph{minimal} admissible
linear systems, there is exactly one minimization step
possible. Here we add row~3 to row~2 and remove
row/column~3:
\begin{displaymath}
\begin{bmatrix}
1 & -x & 0 & 0 \\
. & 1 & y & -1 \\
. & . & 1 & -x \\
. & . & . & 1
\end{bmatrix}
s =
\begin{bmatrix}
. \\ . \\ . \\ 1
\end{bmatrix},
\quad
s =
\begin{bmatrix}
x(1-yx) \\ 1-yx \\ x \\ 1
\end{bmatrix}.
\end{displaymath}
Notice the upper right block of zeros of size $1\times 2$
in the system matrix. This is what we need for the factorization
later (Theorem~\ref{thr:hs.factorization}).
\end{Example}

The ``left'' (row) and ``right'' (column) minimization steps are rather simple.
However, to ensure minimality, we need to do that systematically.
For further details we refer to 
\cite[Section~2]{Schrempf2017b9}
.
To formulate the algorithm we need to decompose the polynomial ALS
$\als{A} = (u,A,v)$ of dimension $n \ge 2$
with respect to some row/column~$k$:
\begin{displaymath}
\als{A}^{[k]} = \left(
\begin{bmatrix}
u_{\block{1}} & . & . 
\end{bmatrix},
\begin{bmatrix}
A_{1,1} & A_{1,2} & A_{1,3} \\
. & 1 & A_{2,3} \\
. & . & A_{3,3}
\end{bmatrix},
\begin{bmatrix}
v_{\block{1}} \\ v_{\block{2}} \\ v_{\block{3}}
\end{bmatrix}
\right).
\end{displaymath}
(To avoid confusion, we use underlined subscripts to denote blocks
in vectors.)\\
By $\als{A}^{[-k]}$ we denote the ALS $\als{A}^{[k]}$
without row/column~$k$ (of dimension $n-1$):
\begin{displaymath}
\als{A}^{[-k]} = \left(
\begin{bmatrix}
u_{\block{1}} & . 
\end{bmatrix},
\begin{bmatrix}
A_{1,1} & A_{1,3} \\
. & A_{3,3}
\end{bmatrix},
\begin{bmatrix}
v_{\block{1}} \\ v_{\block{3}}
\end{bmatrix}
\right).
\end{displaymath}
Removing row/column~$k$ is only ``admissible''
if either $A_{1,2}=0$ or $A_{2,3} =0$ (and $v_{\block{2}}=0$).
For row minimization steps we use the transformation
\begin{displaymath}
\bigl(P(T), Q(U) \bigr) = \left(
\begin{bmatrix}
I_{k-1} & . & . \\
. & 1 & T \\
. & . & I_{n-k}
\end{bmatrix},
\begin{bmatrix}
I_{k-1} & . & . \\
. & 1 & U \\
. & . & I_{n-k}
\end{bmatrix}
\right),
\end{displaymath}
for column minimization steps we use
\begin{displaymath}
\bigl(P(T), Q(U) \bigr) = \left(
\begin{bmatrix}
I_{k-1} & T & . \\
. & 1 & . \\
. & . & I_{n-k}
\end{bmatrix},
\begin{bmatrix}
I_{k-1} & U & . \\
. & 1 & . \\
. & . & I_{n-k}
\end{bmatrix}
\right).
\end{displaymath}

\begin{definition}[Minimization Equations, Transformations
\protect{\cite[Definition~31]{Schrempf2017b9}
}]\label{def:hs.meqn}
Let $\als{A} = (u,A,v)$ be a polynomial ALS of dimension
$n \ge 2$.
For $k = \{ 1,2,\ldots, n-1 \}$ the equations 
$U + A_{2,3} + T A_{3,3} = 0$ and $v_{\block{2}} + T v_{\block{3}} = 0$,
with respect to the block decomposition $\als{A}^{[k]}$
are called \emph{left minimization equations},
denoted by $\mathcal{L}_k = \mathcal{L}_k(\als{A})$.
A solution by the row block pair $(T,U)$ is denoted by
$\mathcal{L}_k(T,U) = 0$, 
the corresponding transformation
$(P,Q) =\bigl(P(T), Q(U) \bigr)$
is called \emph{left minimization transformation}.
For $k = \{ 2,3,\ldots, n \}$ the equations
$A_{1,1} U + A_{1,2} + T = 0$, 
with respect to the block decomposition $\als{A}^{[k]}$
are called \emph{right minimization equations},
denoted by $\mathcal{R}_k = \mathcal{R}_k(\als{A})$.
A solution by the column block pair $(T,U)$ is denoted by
$\mathcal{R}_k(T,U) = 0$,
the corresponding transformation
is called \emph{right minimization transformation}.
\end{definition}

\begin{algorithm}[Minimizing a polynomial ALS
\protect{\cite[Algorithm~32]{Schrempf2017b9}
}]\label{alg:hs.minals}
\ \\
Input: $\als{A} = (u,A,v)$ polynomial ALS
  of dimension $n \ge 2$ (for some polynomial $p$).\\
Output: $\als{A}' = (,,)$ if $p=0$ or
        a minimal polynomial ALS $\als{A}' = (u',A',v')$ if $p \neq 0$.
        
\begin{algtest}
\hbox{}\\[-3ex]
\lnum{1:}\>$k := 2$ \\
\lnum{2:}\>while $k \le \dim \als{A}$ do \\
\lnum{3:}\>\>$n := \dim(\als{A})$ \\
\lnum{4:}\>\>$k' := n  +1 - k$ \\
\lnum{  }\>\>\textnormal{Is the left subfamily
  \raisebox{0pt}[0pt][0pt]{%
    $(s_{k'},\overbrace{\mthstrut s_{k'+1},\ldots, s_{n}}^{\text{lin.~indep.}})$}
    $\field{K}$-linearly dependent?} \\
\lnum{5:}\>\>if $\exists\, T,U \in \field{K}^{1 \times (k-1)}
  \textnormal{ admissible}: \mathcal{L}_{k'}(\als{A})=\mathcal{L}_{k'}(T,U)=0$ then \\
\lnum{6:}\>\>\>if $k' = 1$ then \\
\lnum{7:}\>\>\>\>return $(,,)$ \\
\lnum{  }\>\>\>endif \\
\lnum{8:}\>\>\>\raisebox{0pt}[0pt][0pt]{%
      $\als{A} := \bigl(P(T) \als{A}\, Q(U)\bigr)\mthstrut^{[-k']}$} \\
\lnum{9:}\>\>\>if $k > \max \bigl\{ 2, \frac{n+1}{2} \bigr\}$ then \\
\lnum{10:}\>\>\>\>$k := k-1$ \\
\lnum{   }\>\>\>endif \\
\lnum{11:}\>\>\>continue \\
\lnum{   }\>\>endif \\
\lnum{12--17:}\>\>\textnormal{(for alignment)}\\
\lnum{   }\>\>\textnormal{Is the right subfamily
      \raisebox{0pt}[0pt][0pt]{%
            $(\overbrace{\mthstrut t_1, \ldots,t_{k-1}}^{\text{lin.~indep.}}, t_k)$}
          $\field{K}$-linearly dependent?} \\
\lnum{18:}\>\>if $\exists\, T,U \in \field{K}^{(k-1) \times 1}
    \textnormal{ admissible} : 
     \mathcal{R}_k(\als{A}) = \mathcal{R}_k(T,U)=0$ then \\
\lnum{19:}\>\>\>$\als{A} := \bigl(P(T) \als{A}\, Q(U) \bigr)\mthstrut^{[-k]}$ \\
\lnum{20:}\>\>\>if $k > \max \bigl\{ 2, \frac{n+1}{2} \bigr\}$ then \\
\lnum{21:}\>\>\>\>$k := k-1$ \\
\lnum{   }\>\>\>endif \\
\lnum{22:}\>\>\>continue \\
\lnum{   }\>\>endif \\
\lnum{23:}\>\>$k := k+1$ \\
\lnum{   }\>done \\
\lnum{24:}\>return $P\als{A},$ 
        \textnormal{with $P$, such that $Pv = [0,\ldots,0,\lambda]\trp$}
\end{algtest}
\end{algorithm}

\medskip
\begin{remark}
The line numbering is with respect to the
general algorithm \cite[Algorithm~4.14]{Schrempf2018a2}
. Polynomial admissible linear systems,
called ``pre-standard'' in 
\cite{Schrempf2017b9}
,
are a special case of
\emph{refined} admissible linear systems because their
diagonal blocks are as small as possible, namely $1\times 1$.
The lines 12--15 in 
\cite[Algorithm~32]{Schrempf2017b9}
\ are not even necessary since this special case
is detected in the following part of the algorithm
(the right family is linearly dependent).
\end{remark}

\begin{Remark}
If a linear representation $\pi=(u,A,v)$, say of dimension~$n$,
of some element $p \in \freeALG{\field{K}}{\alphabet{X}}$
is \emph{not} in the form of a polynomial ALS, there exists
\emph{invertible} matrices $P,Q\in\field{K}^{n \times n}$ such
that $\als{A} = P \pi Q$ has this form.
In general, $\pi$ needs to be \emph{minimal}. Then
\cite[Proposition~2.1]{Cohn1999a}
\ ensures the existence of an upper unitriangular linear
representation and 
\cite[Theorem~1.4]{Cohn1999a}
\ implies the existence of such $P$ and $Q$,
which are usually difficult to find.
In our situation it is much simpler: We get the existence
of a \emph{minimal} polynomial ALS (for each element in the
free associative algebra) directly by construction.
\end{Remark}

\section{Horner Systems}\label{sec:hs.hs}

Given $p \in \freeALG{\field{K}}{\alphabet{X}} \setminus\field{K}$
by a \emph{polynomial} ALS $\als{A} = (1,A,\lambda)$,
say of dimension $n$, it is almost straight forward
to generalize the idea of Horner's rule to the non-com\-mu\-ta\-ti\-ve
setting once we recall how to evaluate $p$ by a $d$-tuple
of $m\times m$ matrices $\bar{X}_1, \bar{X}_2, \ldots, \bar{X}_d$
for the letters $x_i \in \alphabet{X}$
(abusing the notation for the left and the right family):
\begin{itemize}
\item Starting with $s_n = I_m$, we compute (rowwise)
  $s_{n-1}$ to $s_1 = p$.
\item Or, starting with $t_1 = I_m$, we compute (columnwise)
  $t_2$ to $t_n = \frac{1}{\lambda} p$.
\end{itemize}
Although we will see later (in Remark~\ref{rem:hs.nonminimality})
that \emph{minimal} admissible linear systems are not necessarily
optimal with respect to the number of multiplications (for the evaluation),
minimization (Algorithm~\ref{alg:hs.minals})
is \emph{the} major step towards \emph{Horner systems}
(Definition~\ref{def:hs.hs}). This becomes visible in particular
in Table~\ref{tab:hs.count}. Minimality plays also a crucial role for
the \emph{factorization} of a polynomial into a product of atoms
(irreducible elements), or an atom into a product of matrices,
and thus for creating (upper right) blocks of zeros in the system
matrix $A$ (if possible). For details we refer to Section~\ref{sec:hs.mf}.

\begin{Remark}
Finding the ``most sparse'' polynomial ALS can be very
difficult in general because \emph{non}-linear systems
of equations need to be solved,
similarly to
\cite[Proposition~42]{Schrempf2017b9}
. So the minimization is
rather cheap since it can be done with complexity
$\complexity(d n^4)$. For details we refer to
\cite[Remark~33]{Schrempf2017b9}
.
Fortunately one can also try \emph{linear} (algebraic)
techniques to ``break'' huge polynomials into smaller
factors
\cite[Remark~5.8]{Schrempf2018a2}
.
\end{Remark}

\begin{table}
\begin{center}
\begin{tabular}{rr|rrr|rrr|}
$k$ & rank & \# terms & \# mult.& $N(p_k)$ & \# terms & \# mult.& $N(q_k)$ \\\hline\tabstrut
0 &  1 &     1 &      0 & 0 &      1 &       0 &  0 \\
1 &  2 &     3 &      0 & 0 &      3 &       0 &  0 \\
2 &  3 &     9 &      9 & 1 &     12 &       9 &  1 \\
3 &  4 &    27 &     54 & 2 &     48 &      72 &  3 \\
4 &  5 &    81 &    243 & 3 &    192 &     432 &  6 \\
5 &  6 &   243 &    972 & 4 &    768 &    2304 & 10 \\
6 &  7 &   729 &   3645 & 5 &   3072 &   11520 & 15 \\
7 &  8 &  2187 &  13122 & 6 &  12288 &   73728 & 21 \\
8 &  9 &  6561 &  45927 & 7 &  49152 &  344064 & 28 \\
9 & 10 & 19683 & 157464 & 8 & 196608 & 1572864 & 36 \\\hline\tabstrut
$k$ & $k+1$ & $3^k$ & $(k-1) 3^k$ & $k-1$ & $3\cdot 4^{k-1}$ &
  Rem.~\ref{rem:hs.count} & $\frac{k}{2}(k-1)$
\end{tabular}
\end{center}
\caption{Number of multiplications for the evaluation of $p_k = (x+y+z)^k$
  (column~4 resp.~5)
  and $q_k = (x_1+y_1+z_1)q_{k-1}+ ... + (x_k+y_k+z_k)q_1$
  (column~7 resp.~8)
  as (finite) formal sum respectively
  minimal polynomial ALS.
  See also Remark~\ref{rem:hs.count}.
  }
\label{tab:hs.count}
\end{table}

Since there are close connections to \emph{companion matrices}
(for the univariate case)
we recall some basics and start with
\emph{companion systems} (Definition~\ref{def:hs.cs})
to construct \emph{minimal} polynomial admissible linear systems.

A \emph{univariate} polynomial 
$p(x) = a_0 + a_1 x + \ldots + a_{n-1} x^{n-1} + x^n
  \in \field{K}[x] = \freeALG{\field{K}}{\{x\}}$
can be expressed as the \emph{characteristic polynomial}
of its \emph{companion matrix} $L = L(p)$, that is,
$p(x) = \det(xI - L)$
\cite[Section~VI.6]{Gantmacher1966a}
,
\begin{displaymath}
p(x) = \det(\underbrace{xI - L}_{=: C(p)}) = \det
\begin{bmatrix}
x & 0 & \ldots & 0 & a_0 \\
-1 & x & \ddots & \vdots & a_1 \\
  & \ddots & \ddots & 0 & \vdots \\
  &  & -1 & x & a_{n-2} \\
  &  &   & -1 & x+a_{n-1}
\end{bmatrix}.
\end{displaymath}
In \cite[Section~8.1]{Cohn1995a}
, $\tilde{C}(p) = xI - L(p)\trp$ is also called \emph{companion matrix}.
Viewing $C(p)$ as \emph{linear matrix pencil}
$C(p) = C_0 \otimes 1 + C_x \otimes x$
generalizes nicely to nc polynomials:
$C(p)$ is ---modulo sign--- just the upper right $(n-1) \times (n-1)$ block
of the system matrix of the (minimal) \emph{right} companion system
$\mathcal{C}_p = (u,A,v) = (1,A,1)$ of dimension~$n$
(Definition~\ref{def:hs.cs}).
Evaluating $p$ in the special case of $q_i = x$ starting from the bottom right
in this \emph{minimal} ALS yields directly
\emph{Horner's rule}.
Notice that here $a_n = 1$, thus $n-1=\rank(p)-2$ multiplications
are needed.

\begin{remark}
Notice that the system matrix $A$ in
Definition~\ref{def:hs.rep} could be also written using
the tensor product $A = A_0 \otimes 1 + A_1 \otimes x_1 + \ldots + A_d \otimes x_d$,
$A_i \in \field{K}^{n \times n}$,
which reduces to the Kronecker tensor product when we plug
in $m \times m$ matrices $\bar{X}_1, \ldots, \bar{X}_d$:
$\bar{A} = A_0 \otimes I_m  + A_1 \otimes \bar{X}_1 + \ldots + A_d \otimes \bar{X}_d
\in \field{K}^{mn \times mn}$.
\end{remark}

\begin{remark}
In \cite[Section~11.1]{Bart2008a}
\ the companion matrix $L(p)$ is called \emph{second companion},
its transpose $L(p)\trp$ \emph{first companion} (matrix).
\end{remark}

\begin{definition}[Companion Systems
\protect{\cite[Definition~46]{Schrempf2017b9}
}]\label{def:hs.cs}
For $i=1,2,\ldots, m$ let $q_i \in \freeALG{\field{K}}{\alphabet{X}}$
with $\rank{q_i} = 2$ and $a_i \in \field{K}$.
For a polynomial $p \in \freeALG{\field{K}}{\alphabet{X}}$
of the form\\
$p = q_m q_{m-1} \cdots q_1 + a_{m-1} q_{m-1} \cdots q_1 + \ldots + a_2 q_2 q_1
    + a_1 q_1 + a_0$
the polynomial ALS
\begin{equation}\label{eqn:hs.lcs}
\begin{bmatrix}
1 & -q_m -a_{m-1} & -a_{m-2} & \ldots & -a_1 & -a_0 \\
  & 1 & -q_{m-1} & 0 & \ldots & 0 \\
  &   & \ddots & \ddots & \ddots & \vdots \\
  &   &   &  1 & -q_2 & 0 \\
  &   &   &  & 1 & -q_1 \\
  &   &   &  & & 1
\end{bmatrix}
s = 
\begin{bmatrix}
0 \\ 0 \\ \vdots \\ 0 \\ 0 \\ 1
\end{bmatrix}
\end{equation}
is called \emph{left companion system}.
And for a polynomial $p \in \freeALG{\field{K}}{\alphabet{X}}$ of the form\\
$p = a_0 + a_1 q_1 + a_2 q_1 q_2 + \ldots + a_{m-1} q_1 q_2 \cdots q_{m-1}
    + q_1 q_2 \cdots q_m$
the polynomial ALS
\begin{equation}\label{eqn:hs.rcs}
\begin{bmatrix}
1 & -q_1 & 0  & \ldots & 0 & -a_0 \\
  & 1 & -q_2 & \ddots & \vdots & -a_1 \\
  &   & \ddots & \ddots & 0 & \vdots \\
  &   &   &  1 & -q_{m-1} & -a_{m-2} \\
  &   &   &  & 1 & -q_m -a_{m-1} \\
  &   &   &  & & 1
\end{bmatrix}
s = 
\begin{bmatrix}
0 \\ 0 \\ \vdots \\ 0 \\ 0 \\ 1
\end{bmatrix}
\end{equation}
is called \emph{right companion system}.
\end{definition}

\begin{Example}[%
\protect{\cite[Example~50]{Schrempf2017b9}
}]
The left companion system of\\
$p(x) = x^3 - 10 x^2 + 31 x - 30$ is
\begin{displaymath}
\begin{bmatrix}
1 & -x+10 & -31 & 30 \\
. & 1 & -x & . \\
. & . & 1 & -x \\
. & . & . & 1
\end{bmatrix}
s =
\begin{bmatrix}
. \\ . \\ . \\ 1
\end{bmatrix},
\quad
s =
\begin{bmatrix}
p(x) \\
x^2 \\
x \\
1
\end{bmatrix}.
\end{displaymath}
\end{Example}

\begin{Remark}
One can view the algorithm (in the univariate case) in
\cite{Tajima2014a}
\ as taking the \emph{left} companion system
\eqref{eqn:hs.lcs}
and evaluate the matrix powers in the left family
(Definition~\ref{def:hs.family})
$s = (p, x^{n-1}, \ldots, x^2, x, 1)$
efficiently, for example, $x^4 = x^2 \cdot x^2$.
Notice, that in this case the coefficients $a_i \in \field{K}$ are
assumed to be scalar. Matrix valued coefficients (or parameters)
can easily be treated by an augmented alphabet,
here $\tilde{\alphabet{X}} = \alphabet{X} \cup \{ a_0, a_1, \ldots, a_{n-1} \}$.
\end{Remark}

Here we consider the general case and assume that
our alphabet $\alphabet{X}$ contains the
matrix valued parameters (mainly $a$, $b$ and $c$).
Although we can evaluate a polynomial
with matrices of appropriate sizes,
we typically plug in $m \times m$ matrices and measure
the ``evaluation complexity'' as the minimal number of
matrix-matrix multiplications
with respect to a \emph{polynomial} ALS.

\begin{remark}
Recall that we assume only that the multiplication is the dominating part,
that is, its complexity is $\complexity(m^{2+\varepsilon})$
for $\varepsilon > 0$.
\end{remark}

\begin{definition}[Left/Right/Minimal Number of Multiplications]
Let $p \in \freeALG{\field{K}}{\alphabet{X}}$ be given by
the polynomial ALS $\als{A}=(1,A,\lambda)$ of dimension $n\ge 2$.
The minimal number of \emph{non-scalar} entries in the upper left
(respectively lower right)
$(n-1)\times (n-1)$ block of $A$ is called
\emph{left} (respectively \emph{right})
\emph{number of multiplications}, written as $N_s(\als{A})$
(respectively $N_t(\als{A})$).
The number of multiplications (of a polynomial ALS) is
denoted by $N(\als{A}) = \min \{ N_s(\als{A}), N_t(\als{A}) \}$.
If $N(\als{A})\le N(\als{B})$ for all
polynomial admissible linear systems $\als{B}$ for $p$,
we write $N(p) = N(\als{A})$.
\end{definition}

\begin{definition}[Horner System]
\label{def:hs.hs}
Let $p \in \freeALG{\field{K}}{\alphabet{X}}$.
A polynomial ALS $\als{A}$ for $p$ is called \emph{Horner system}
if $N(\als{A}) = N(p)$.
\end{definition}

\begin{Remark}\label{rem:hs.nonminimality}
It is clear that a Horner System (for a given polynomial)
is not unique. Less obvious is the fact that a Horner system
is not necessarily a \emph{minimal} ALS.
This is shown in the following example:
Let $p = ab(xyz + yz + z + 1) + acxyz$. A \emph{Horner system}
for $p$ is given by the ALS $\als{A}$,
\begin{displaymath}
\begin{bmatrix}
1 & -a & . & . & . & . & . \\
. & 1 & -b & -c & . & . & . \\
. & . & 1 & -1 & -1 & -1 & -1  \\
. & . & . & 1 & -x & . & . \\
. & . & . & . & 1 & -y & . \\
. & . & . & . & . & 1 & -z \\
. & . & . & . & . & . & 1
\end{bmatrix}
s = 
\begin{bmatrix}
. \\ . \\ . \\ . \\ . \\ . \\ 1
\end{bmatrix},
\end{displaymath}
with $N(\als{A}) = N_s(\als{A}) = N_t(\als{A}) = 5$.
Adding column~3 to columns~4--7 and removing row~3 and column~3
yields the \emph{minimal} ALS $\als{A}'$,
\begin{displaymath}
\begin{bmatrix}
1 & -a & . & . & . & . \\
. & 1 & -b-c & -b & -b & -b \\
. & . & 1 & -x & . & . \\
. & . & . & 1 & -y & . \\
. & . & . & . & 1 & -z \\
. & . & . & . & . & 1
\end{bmatrix}
s = 
\begin{bmatrix}
. \\ . \\ . \\ . \\ . \\ 1
\end{bmatrix},
\end{displaymath}
with $N(\als{A}')=N_s(\als{A}')=6$ and $N_t(\als{A}')=7$.
However, evaluating $p$ using the representation as
(finite) formal sum ---in a naive way--- needs
13~multiplications. And since the worst case is of exponential
complexity, the restriction to \emph{minimal} admissible linear systems
will suffice in practice for a first ``evaluation simplification''.
See Table~\ref{tab:hs.count}.
\end{Remark}

\begin{proposition}[Evaluation Complexity of Polynomials]\label{pro:hs.complexity}
Let $p \in \freeALG{\field{K}}{\alphabet{X}}$ of rank $n \ge 2$.
Then $n-2 \le N(p) \le \frac{1}{2}(n-1)(n-2)$.
\end{proposition}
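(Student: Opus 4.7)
The plan is to prove the two bounds separately. The upper bound follows almost immediately from the existence of a minimal polynomial ALS, while the lower bound requires a dimension-counting argument applied to an \emph{arbitrary} polynomial ALS, since $N(p)$ is an infimum over all polynomial admissible linear systems for $p$, of possibly any dimension $m\ge n$.

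For the upper bound I would take a minimal polynomial ALS $\als{A}=(1,A,\lambda)$ of $p$, whose existence is guaranteed by Propositions~\ref{pro:hs.minmon} and~\ref{pro:hs.ratop} together with Algorithm~\ref{alg:hs.minals}. It has dimension $n$ and $A$ is upper unitriangular, so the upper left $(n-1)\times(n-1)$ block has $1$'s on its diagonal, zeros below it, and thus at most $\binom{n-1}{2}=\tfrac{1}{2}(n-1)(n-2)$ positions available for non-scalar entries. Hence $N_s(\als{A})\le\tfrac{1}{2}(n-1)(n-2)$ and $N(p)\le N(\als{A})\le\tfrac{1}{2}(n-1)(n-2)$.

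For the lower bound I would let $\als{A}=(1,A,\lambda)$ be any polynomial ALS for $p$ of dimension $m$, and work on the right family. The column-equation $u=tA$ reads, for each $k\ge 2$, $t_k=-\sum_{i<k}t_i\,a_{ik}$, so if every $a_{ik}$ with $i<k$ is a scalar then $t_k\in\linsp\{t_1,\ldots,t_{k-1}\}$. Let $C\subseteq\{2,\ldots,m\}$ denote the set of columns carrying at least one non-scalar entry above the diagonal. Successive elimination of the $t_k$ with $k\notin C$ from a spanning set of $R(\als{A})$ yields $\dim R(\als{A})\le 1+|C|$, and together with $\dim R(\als{A})=n$ this gives $|C|\ge n-1$. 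Each column $k\in C\cap\{2,\ldots,m-1\}$ contributes at least one non-scalar entry to the upper left $(m-1)\times(m-1)$ block, and these entries lie in pairwise distinct columns, so $N_s(\als{A})\ge|C\cap\{2,\ldots,m-1\}|\ge|C|-1\ge n-2$. The symmetric argument applied to the row equation $As=v$ yields $N_t(\als{A})\ge n-2$. Hence $N(\als{A})\ge n-2$, and since $\als{A}$ is arbitrary, $N(p)\ge n-2$.

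The one step I expect to require care is establishing $\dim L(\als{A})=\dim R(\als{A})=n$ for \emph{non-minimal} polynomial admissible linear systems, since Proposition~\ref{pro:hs.cohn94.47} supplies this identity only for minimal representations. The remedy is the observation that admissible transformations act on the left and right families by an invertible change of basis and therefore preserve $L(\als{A})$ and $R(\als{A})$, while each elimination step in Algorithm~\ref{alg:hs.minals} removes a vector already lying in the span of the remaining ones; both spans are therefore unchanged throughout the reduction of $\als{A}$ to a minimal polynomial ALS of dimension $n$, at which point Proposition~\ref{pro:hs.cohn94.47} applies. Once this invariance is in place, the counting for the lower bound is direct.
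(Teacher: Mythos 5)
Your overall strategy coincides with the paper's: the upper bound is read off the strictly upper triangular part of a minimal polynomial ALS, and the lower bound comes from observing that a column of the system matrix containing only scalar entries forces the corresponding right-family element into the span of its predecessors. The one step that fails is exactly the one you flagged: the claim that $\dim L(\als{A})=\dim R(\als{A})=n$ for an \emph{arbitrary} polynomial ALS is false, and the invariance argument you offer for it does not work. A left (row) elimination step deletes the right-family entry $t_k=-\sum_{i<k}t_i a_{ik}$, which involves the possibly non-scalar entries $a_{ik}$ and therefore need \emph{not} lie in the span of the remaining $t_i$; dually, a right (column) step can delete a left-family element outside the span of the rest. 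So it is only true that each step removes, from \emph{one} of the two families, a vector in the span of the others. Concretely, for $p=x$ (rank $2$) the polynomial ALS with system matrix
\begin{displaymath}
\begin{bmatrix}
1 & -x & -y & 0\\
. & 1 & 0 & -1\\
. & . & 1 & 0\\
. & . & . & 1
\end{bmatrix},
\qquad v=[0,0,0,1]\trp,
\end{displaymath}
has right family $t=(1,x,y,x)$, hence $\dim R(\als{A})=3>2=\rank p$.

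Fortunately your counting only uses the inequality $\dim R(\als{A})\ge n$, and that inequality does hold: admissible transformations preserve both spans, each deletion can only shrink them, and the terminal minimal system has $\dim R=n$ by Proposition~\ref{pro:hs.cohn94.47}; so $\dim R(\als{A})\ge n$ for the system you started from. Replacing your equality by this inequality closes the proof. Alternatively you can argue as the paper does and bypass the span of the right family altogether: if $N_s(\als{A})\le n-3$ for an ALS of dimension $m$, then at least $m-n+1$ of the columns $2,\dots,m-1$ are entirely scalar; each such column can be removed by a right minimization step (these transformations are scalar, so the remaining scalar columns stay scalar), and one arrives at a linear representation of $p$ of dimension $n-1<\rank p$, a contradiction. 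Everything else in your write-up, including the upper bound and the treatment of $N_t$ by symmetry, matches the paper's proof.
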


\begin{proof}
Since a polynomial of rank~$n$ admits a polynomial ALS
of dimension~$n$, the upper bound follows directly from
the upper unitriangular system matrix.
For the lower bound we can assume without loss of generality
that $N(p) = N_s(\als{A})$ for some polynomial ALS $\als{A}=(1,A,\lambda)$
of dimension $\dim \als{A} = m \ge n$.
However, $N_s(\als{A}) \le n-3$ would imply that there
are at least $m-n+1$ scalar columns in the system matrix $A$
(except column~1 which must not be touched and
column~$m$ which is irrelevant for $N_s$)
which could be removed after appropriate row operations,
contradicting that $n$ is the rank of $p$.
\end{proof}

\begin{Remark}\label{rem:hs.count}
The polynomial ALS from Table~\ref{tab:hs.count} for $p_k$ is
\begin{displaymath}
\begin{bmatrix}
1 & -(x+y+z) \\
  & 1 & -(x+y+z) \\
  &   & \ddots & \ddots \\
  &   &   & 1 & -(x+y+z) \\
  &   &   &   & 1
\end{bmatrix}
s = 
\begin{bmatrix}
0 \\ 0 \\ \vdots \\ 0 \\ 1
\end{bmatrix},
\end{displaymath}
that for $q_k$ is
\begin{displaymath}
\begin{bmatrix}
1 & -(x_1+y_1+z_1) & -(x_2+y_2+z_2) & \ldots & -(x_k+y_k+z_k) \\
  & 1 & -(x_1+y_1+z_1) & \ldots & -(x_{k-1} +y_{k-1}+z_{k-1}) \\
  &   & \ddots & \ddots & \vdots \\
  &   &   & 1 & -(x_1+y_1+z_1) \\
  &   &   &   & 1
\end{bmatrix}
s = 
\begin{bmatrix}
0 \\ 0 \\ \vdots \\ 0 \\ 1
\end{bmatrix}.
\end{displaymath}
Let $\ell$ be the number of letters in each entry of the system
matrix (here $\ell=3$) and $n$ the dimension of the (polynomial)
admissible linear system. Then there are $\ell$ words of length~1
in $s_{n-1}$ which we denote by ``$\ell\cdot 1$''.
In $s_{n-2}$ the words are $\ell\cdot 1$ and
$\ell$-times the words of $s_{n-1}$ with one additional letter,
that is, $\ell\cdot 1 + \ell^2 \cdot (1+1)$.
In $s_{n-3}$ the words are
  $\ell\cdot 1 + \ell^2 \cdot 2 + \ell\bigl(\ell \cdot (1+1) + \ell^2\cdot (2+1)\bigr)
  = \ell\cdot 1 + 2\,\ell^2 \cdot 2 + \ell^3\cdot 3$.
In $s_{n-4}$ and $s_{n-5}$ the words are
\begin{align*}
&\fbox{1}\,\ell\cdot 1 + \fbox{3}\,\ell^2\cdot 2 + \fbox{3}\,\ell^3\cdot 3
  + \fbox{1}\,\ell^4\cdot 4 \quad\text{resp.}\\
&\fbox{1}\,\ell\cdot 1 + \fbox{4}\,\ell^2\cdot 2 + \fbox{6}\,\ell^3\cdot 3
  + \fbox{4}\,\ell^4\cdot 4 + \fbox{1}\,\ell^5\cdot 5,
\end{align*}
revealing that the coefficients are the entries in the respective row
of the Pascal triangle
\begin{displaymath}
\begin{array}{ccccccccccc}
& & & & & 1 \\
& & & & 1 && 1 \\
& & & 1 && 2 && 1 \\
& & 1 && 3 && 3 && 1 \\
& 1 && 4 && 6 && 4 && 1 \\
1 && 5 && 10 && 10 && 5 && 1 \\
\revddots\ddots && \revddots\ddots && \revddots\ddots && \revddots\ddots && \revddots\ddots && \revddots\ddots 
\end{array}.
\end{displaymath}
Now both, the number of terms and the number of multiplications,
are immediate.
\end{Remark}

\section{Matrix Factorization}\label{sec:hs.mf}

Before we introduce the concept of the factorization of
polynomials into a product of matrices (aka ``matrix factorization'')
in Definition~\ref{def:hs.mred}
we recall the basics from the ``minimal'' multiplication of
polynomials and the opposite point of view, namely the
\emph{polynomial factorization}
(Theorem~\ref{thr:hs.factorization}).

\medskip
The factorization of polynomials into \emph{atoms}, that is,
\emph{irreducible elements} (Definition~\ref{def:hs.irrpoly})
corresponds to the transformation
of a \emph{minimal} polynomial admissible linear system
to one with a system matrix having the ``finest'' possible
upper right ``staircase'' of zeros, for example $p = xyz$
given by the ALS (Definition~\ref{def:hs.pals})
\begin{displaymath}
\begin{bmatrix}
1 & -x & 0 & 0 \\
. & 1 & -y & 0 \\
. & . & 1 & -z \\
. & . & . & 1
\end{bmatrix}
s =
\begin{bmatrix}
. \\ . \\ . \\ 1
\end{bmatrix},
\quad
s =
\begin{bmatrix}
xyz \\ yz \\ z \\ 1
\end{bmatrix}.
\end{displaymath}
Those upper right blocks of zeros come directly from
the ``minimal'' polynomial multiplication
\cite[Proposition~28]{Schrempf2017b9}
, illustrated in the following example.
Notice however, that this (upper right) form is not unique
in general.

\begin{Example}
Let $p = xy+1$ and $q = zx-3$ be given by the \emph{minimal} ALS
\begin{displaymath}
\begin{bmatrix}
1 & -x & -1 \\
. & 1 & -y \\
. & . & 1 
\end{bmatrix}
s =
\begin{bmatrix}
. \\ . \\ 1
\end{bmatrix}
\quad\text{and}\quad
\begin{bmatrix}
1 & -z & 3 \\
. & 1 & -x \\
. & . & 1
\end{bmatrix}
s =
\begin{bmatrix}
. \\ . \\ 1
\end{bmatrix}
\end{displaymath}
respectively. Recall that
the symbol $s$ for the solution vector is
used in a generic way.
By Proposition~\ref{pro:hs.ratop},
an ALS for the product $pq = (xy+1)(zx-3)$ is given by
\begin{displaymath}
\begin{bmatrix}
1 & -x & -1 & . & . & . \\
. & 1 & -y & . & . & . \\
. & . & 1 & -1 & . & . \\
. & . & . & 1 & -z & 3 \\
. & . & . & . & 1 & -x \\
. & . & . & . & . & 1
\end{bmatrix}
s = 
\begin{bmatrix}
. \\ . \\ . \\ . \\ . \\ 1
\end{bmatrix}.
\end{displaymath}
Now, if we add column~3 to column~4 we get
\begin{displaymath}
\begin{bmatrix}
1 & -x & -1 & -1 & . & . \\
. & 1 & -y & -y & . & . \\
. & . & 1 & 0 & . & . \\
. & . & . & 1 & -z & 3 \\
. & . & . & . & 1 & -x \\
. & . & . & . & . & 1
\end{bmatrix}
s = 
\begin{bmatrix}
. \\ . \\ 0 \\ . \\ . \\ 1
\end{bmatrix}
\end{displaymath}
where the third row equation reads $s_3 = 0$ and hence
we can remove row~3 and column~3 since there is no contribution
to the first component $s_1 = pq$ in the solution vector $s$.
Thus a \emph{minimal} ALS for $pq$ is given by
\begin{displaymath}
\begin{bmatrix}
1 & -x & -1 & 0 & 0 \\
. & 1 & -y & 0 & 0 \\
. & . & 1 & -z & 3 \\
. & . & . & 1 & -x \\
. & . & . & . & 1
\end{bmatrix}
s = 
\begin{bmatrix}
0 \\ 0 \\ . \\ . \\ 1
\end{bmatrix}.
\end{displaymath}
For concrete examples minimality can be checked easily
by using Proposition~\ref{pro:hs.cohn94.47}.
In the general case a systematic application of left
and right minimization steps (as in Algorithm~\ref{alg:hs.minals})
ensures minimality.
Notice the upper right $2 \times 2$ block of zeros
in the system matrix (and the upper zeros in the
right hand side).
\end{Example}

\begin{theorem}[Polynomial Factorization
\protect{\cite[Theorem~40]{Schrempf2017b9}
}]\label{thr:hs.factorization}
Let $p \in \freeALG{\field{K}}{\alphabet{X}}$ be given by the
\emph{minimal} polynomial ALS $\als{A} = (1,A,\lambda)$
of dimension $n = \rank p \ge 3$.
Then $p$ has a factorization into $p = q_1 q_2$ with $\rank(q_i) = n_i \ge 2$
if and only if there exists a polynomial transformation $(P,Q)$
such that $PAQ$ has an upper right block of zeros of size $(n_1 - 1) \times (n_2 - 1)$.
\end{theorem}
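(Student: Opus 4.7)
The proof splits into sufficiency and necessity. For sufficiency ($\Leftarrow$), I would work with the transformed linear representation $\als{B} = P\als{A}Q$. Its system matrix $B = PAQ$ is upper unitriangular with the asserted upper right $(n_1-1)\times(n_2-1)$ zero block; after a further reduction (absorbed into $(P,Q)$) one may assume the right-hand side has its non-zero entries only below position $n_1$, so $\als{B}$ is again a polynomial ALS for $p$. Since the total dimension is $n = (n_1-1)+1+(n_2-1) = n_1+n_2-1$, this invites a $3\times 3$ block decomposition with row and column block sizes $(n_1-1,\,1,\,n_2-1)$,
\[
B = \begin{bmatrix} B_{11} & B_{12} & 0 \\ 0 & 1 & B_{23} \\ 0 & 0 & B_{33} \end{bmatrix},
\]
where the central diagonal entry is $1$ by the polynomial structure. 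From $\als{B}$ I would extract two sub-ALS: a polynomial ALS $\als{A}_1$ of dimension $n_1$ consisting of the top-left $n_1 \times n_1$ block with right-hand side $e_{n_1}$, defining some $q_1$; and a polynomial ALS $\als{A}_2$ of dimension $n_2$ consisting of the bottom-right $n_2 \times n_2$ block with the corresponding tail of the right-hand side of $\als{B}$, defining some $q_2$.

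Solving $\als{B} s = Pv$ bottom-up and exploiting the zero block, the middle component of the solution coincides with the first component of the $\als{A}_2$-solution (that is, $q_2$), and the top block then gives $s_I = -B_{11}\inv B_{12}\, q_2$, whose first entry reads off as $q_1 q_2$; hence $p = q_1 q_2$. By construction $\rank q_i \le n_i$, and if $\rank q_j < n_j$ for some $j$, the minimal polynomial multiplication (Proposition~28 of \cite{Schrempf2017b9}) applied to minimal ALS for $q_1, q_2$ would yield an ALS for $p$ of dimension $\rank q_1 + \rank q_2 - 1 < n$, contradicting $\rank p = n$. Thus $\rank q_i = n_i$.

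For necessity ($\Rightarrow$), given $p = q_1 q_2$ with $\rank q_i = n_i$, I would take minimal polynomial ALS for each $q_i$ and assemble the minimal polynomial product $\als{B}$ via Proposition~28 of \cite{Schrempf2017b9}. By construction, $\als{B}$ is a polynomial ALS for $p$ of dimension $n_1+n_2-1 = n$ whose system matrix carries the required upper right zero block of size $(n_1-1)\times(n_2-1)$; since the dimension equals $\rank p$, $\als{B}$ is minimal.

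The main obstacle is then to upgrade the abstract equivalence between the two minimal ALS $\als{A}$ and $\als{B}$ to an \emph{explicitly polynomial} transformation. Proposition~\ref{pro:hs.cohn94.47} yields invertible $P,Q$ with $\als{B} = P\als{A}Q$, and $Q$ is admissible because both are ALS. The delicate step is that $P$ and $Q$ may in fact be chosen upper unitriangular, i.e., polynomial in the sense of Definition~\ref{def:hs.pals}; this rests on the structural uniqueness of minimal polynomial ALS up to polynomial transformation from \cite{Schrempf2017b9}, which is itself traceable to the rigidity of the coinciding spaces $L(\pi)$ and $R(\pi)$ across all minimal representations of $p$. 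Once this is secured, $PAQ$ coincides with the system matrix of the constructed $\als{B}$ and thus carries the desired zero block.
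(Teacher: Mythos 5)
First, a point of comparison: the paper does not prove Theorem~\ref{thr:hs.factorization} at all --- it is imported verbatim from \cite[Theorem~40]{Schrempf2017b9} --- so your proposal can only be judged on its own merits. Your sufficiency direction has the right skeleton (the $(n_1-1,\,1,\,n_2-1)$ block decomposition, reading off $q_1$ and $q_2$, and using minimality of $\als{A}$ together with the product construction to force $\rank q_i = n_i$), but the right-hand side is not as innocent as you claim. For a general polynomial transformation one has $(Pv)_i = \alpha_{i,n}\lambda$, so the top block of the transformed right-hand side need not vanish and your computation actually yields $p = q_1 q_2 + r$ with $r = (B_{11}\inv w_{\block{1}})_1$ a genuine polynomial, not zero. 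The ``further reduction absorbed into $(P,Q)$'' does not obviously exist: clearing $(Pv)_i$ for $i \le n_1-1$ by adding multiples of the last row puts the cleared scalars back into the entries $(i,n)$ of the system matrix, which lie inside the very zero block you must preserve, and scalar column operations cannot use column~$1$. This is precisely why Definition~\ref{def:hs.pals} singles out polynomial \emph{factorization} transformations ($\alpha_{i,n}=0$); you must either restrict to those or argue that the residual term $r$ can be absorbed.

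The necessity direction has the more serious gap. You build the product ALS $\als{B}$ via Proposition~\ref{pro:hs.ratop} plus one minimization step (fine), and then want upper unitriangular $P,Q$ with $P\als{A}Q=\als{B}$, appealing to a ``structural uniqueness of minimal polynomial ALS up to polynomial transformation.'' That statement is false: two minimal polynomial ALS for the same $p$ can differ by a permutation of the left family --- for $p=xy+yx$ the minimal ALS with left families $(p,y,x,1)$ and $(p,x,y,1)$ are related by the \emph{unique} admissible transformation swapping rows/columns $2$ and $3$, which is not unitriangular --- and the paper itself warns about exactly this in Example~\ref{ex:hs.anticomm} (``permutations of rows/columns are excluded''). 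The theorem does not require $PAQ$ to coincide with the system matrix of your particular $\als{B}$; it only requires \emph{some} polynomial transformation of the given $\als{A}$ exhibiting a zero block, and establishing that needs a different argument: one must exploit that the left family of a polynomial ALS is adapted to a flag in $L(p)$ and that the factorization $p=q_1q_2$ embeds $L(q_2)$ (respectively $R(q_1)$) into $L(p)$ (respectively $R(p)$) compatibly with such a flag, these spaces being canonical by Proposition~\ref{pro:hs.cohn94.47}. As written, the ``delicate step'' you flag is not a step but the entire content of the cited Theorem~40.
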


\begin{Example}
Let $p = 2aexc + 2bxc - aexd - bxd$
\cite{deOliveira2012a}
\ given by the \emph{minimal} ALS
$\als{A} = (u,A,v)$,
\begin{displaymath}
\begin{bmatrix}
1 & -a & -b & -a & . \\
. & 1 & -e & . & 2c-d \\
. & . & 1 & -x & . \\
. & . & . & 1 & d-2c \\
. & . & . & . & 1
\end{bmatrix}
s = 
\begin{bmatrix}
. \\ . \\ . \\ . \\ 1
\end{bmatrix}.
\end{displaymath}
To find a non-trivial factor of $p$ we need to find
a transformation $(P,Q)$ of the form
\begin{displaymath}
(P,Q) = \left(
\begin{bmatrix}
1 & \alpha_{1,2} & \alpha_{1,3} & \alpha_{1,4} & 0 \\
. & 1 & \alpha_{2,3} & \alpha_{2,4} & 0 \\
. & . & 1 & \alpha_{3,4} & 0 \\
. & . & . & 1 & 0 \\
. & . & . & . & 1
\end{bmatrix},
\begin{bmatrix}
1 & 0 & 0 & 0 & 0 \\
. & 1 & \beta_{2,3} & \beta_{2,4} & \beta_{2,5} \\
. & . & 1 & \beta_{3,4} & \beta_{3,5} \\
. & . & . & 1 & \beta_{3,5} \\
. & . & . & . & 1
\end{bmatrix}
\right)
\end{displaymath}
(see Definition~\ref{def:hs.pals})
such that $PAQ$ has an upper right block of
zeros of size $1 \times 3$, $2 \times 2$ or $3 \times 1$.
In this case it is (almost) immediate that
we need to add row~4 to row~2 and subtract column~2 from
column~4, that is, $\alpha_{2,4} = 1$
and $\beta_{2,4} = -1$,
\begin{displaymath}
(P,Q) = \left(
\begin{bmatrix}
1 & . & . & . & 0 \\
. & 1 & . & 1 & 0 \\
. & . & 1 & . & 0 \\
. & . & . & 1 & 0 \\
. & . & . & . & 1
\end{bmatrix},
\begin{bmatrix}
1 & 0 & 0 & 0 & 0 \\
. & 1 & . & -1 & . \\
. & . & 1 & . & . \\
. & . & . & 1 & . \\
. & . & . & . & 1
\end{bmatrix}
\right)
\end{displaymath}
and thus $P\als{A}Q = (uQ, PAQ, Pv)$,
\begin{displaymath}
\begin{bmatrix}
1 & -a & -b & 0 & 0 \\
. & 1 & -e & 0 & 0 \\
. & . & 1 & -x & 0 \\
. & . & . & 1 & d-2c \\
. & . & . & . & 1
\end{bmatrix}
s =
\begin{bmatrix}
. \\ . \\ . \\ . \\ 1
\end{bmatrix}.
\end{displaymath}
For the evaluation of $p$ (in the ``expanded'' form)
with $m \times m$ matrices, $10\,\complexity(m^3)$ operations
are necessary while only $3\,\complexity(m^3)$ suffice for
the factorized version
$p = 2aexc + 2bxc - aexd - bxd = (ae + b)x(2c-d)$.
\end{Example}

\begin{remark}
In general it can be difficult to find these (invertible)
transformation matrices (if they exist),
in particular, if the base field $\field{K}$ is not
algebraically closed, that is,
$\field{K}\subsetneq \aclo{\field{K}}$.
Testing (ir)reducibility works practically for rank $\le 12$,
in some cases up to rank $\le 17$
\cite[Chapter~2]{Janko2018a}
.
In the previous example it was easy because we can solve
a \emph{linear} system of equations for ``non-overlapping''
row and column transformations, that is,
if we use column~3 to create an upper right block of zeros
of size $2 \times 2$, we are not allowed to use row~3
(and vice versa). See also 
\cite[Remark~5.8]{Schrempf2018a2}
.
\end{remark}

\medskip
Before we formalize the factorization of a polynomial
into matrices we show the idea in an example.
A comprehensive theory for the work with matrices
(from an algebraic perspective including the
general factorization theory 
\cite{Schrempf2017c2}
) is considered in future work.
Here we need only the
fact that we can \emph{admissibly} transform a (polynomial)
ALS. If we find a certain pattern of zeros,
we can read off the matrices ---more or less--- directly
and their product yields the polynomial.
In the case of a polynomial matrix (not to be confused with
matrix polynomial), additional letters can be used to view it
as a ``classical'' nc polynomial
(Example~\ref{ex:hs.mv}).

\begin{Example}[``Matrix factorization'' of the Antikommutator]\label{ex:hs.anticomm}
Let $p = xy+yx$ given by the minimal polynomial ALS $\als{A} = (u,A,v)$,
\begin{displaymath}
\begin{bmatrix}
1 & - x & - y & 0 \\
. & 1 & 0 & -y \\
. & . & 1 & -x \\
. & . & . & 1
\end{bmatrix}
s = 
\begin{bmatrix}
. \\ . \\ . \\ 1
\end{bmatrix}.
\end{displaymath}
In this case only $2$~multiplications are necessary.
Notice the zeros in the system matrix. In this case we can write $p$
as a product of two matrices:
\begin{displaymath}
p =
\begin{bmatrix}
x & y
\end{bmatrix}
\begin{bmatrix}
1 & . \\
. & 1 
\end{bmatrix}\inv
\begin{bmatrix}
y \\ x
\end{bmatrix}
=
\begin{bmatrix}
x & y
\end{bmatrix}
\begin{bmatrix}
y \\ x
\end{bmatrix}.
\end{displaymath}
If $p$ is given by any other \emph{minimal} polynomial ALS
we can look for an admissible transformation $(P,Q)$ of the form
\begin{displaymath}
(P,Q) = 
\left(
\begin{bmatrix}
1 & \alpha_{1,2} & \alpha_{1,3} & 0 \\
. & 1 & \alpha_{2,3} & 0 \\
. & . & 1 & 0 \\
. & . & . & 1
\end{bmatrix},
\begin{bmatrix}
1 & 0 & 0 & 0 \\
. & 1 & \beta_{2,3} & \beta_{2,4} \\
. & . & 1 & \beta_{3,4} \\
. & . & . & 1
\end{bmatrix}
\right)
\end{displaymath}
such that $PAQ$ has the form (``$*$'' denotes some non-zero entry)
\begin{displaymath}
\begin{bmatrix}
1 & * & * & 0 \\
. & 1 & 0 & * \\
. & . & 1 & * \\
. & . & . & 1
\end{bmatrix}.
\end{displaymath}
This yields a \emph{non}-linear (polynomial) system of equations.
For details and how to solve such a systems we refer to
\cite[Section~4.4]{Schrempf2018c}
.
Notice that these transformation matrices
do \emph{not} suffice in general
because permutations of rows/columns are excluded.
Thus we need (admissible) transformations of the form
\begin{equation}\label{eqn:hs.gft}
(P,Q) = 
\left(
\begin{bmatrix}
\alpha_{1,1} & \ldots & \alpha_{1,n-1} & 0 \\
\vdots & \ddots & \vdots & \vdots \\
\alpha_{n-1,1} & \ldots & \alpha_{n-1,n-1} & 0 \\
\alpha_{n,1} & \ldots & \alpha_{n,n-1} & 1 \\
\end{bmatrix},
\begin{bmatrix}
1 & 0 & \ldots & 0 \\
\beta_{2,1} & \beta_{2,2} & \ldots & \beta_{2,n} \\
\vdots & \vdots & \ddots  & \vdots \\
\beta_{n,1} & \beta_{n,2} & \ldots & \beta_{n,n}
\end{bmatrix}
\right)
\end{equation}
and invertibility conditions $\det P \neq 0$
and $\det Q \neq 0$.
In such a case we call $(P,Q)$
(admissible) \emph{factorization transformation}.
\end{Example}

\begin{definition}[Matrix Reducibility]\label{def:hs.mred}
Let $p \in \freeALG{\field{K}}{\alphabet{X}}$ of rank $n \ge 3$
given by the \emph{minimal} polynomial ALS $\als{A} = (u,A,v)$
and $k \in \{ 1, 2, \ldots, n-2 \}$.
If there exists an $i \in \{ 1, 2, \ldots, n-k-1 \}$ and 
a factorization transformation $(P,Q)$ such that
$P\als{A} Q$ is again a polynomial ALS,
$PAQ$ has an upper right block of zeros of size $i \times (n-i-k)$
and an identity diagonal $k \times k$ block in rows $i+1$ to $i+k$,
that is, $PAQ$ has the form
\begin{displaymath}
\begin{array}{r}
i \text{ rows} \\
k \text{ rows} \\
n-i-k \text{ rows}
\end{array}
\begin{bmatrix}
* & * & 0 \\
0 & I_k & * \\
0 & 0 & *
\end{bmatrix},
\end{displaymath}
then $p$ is called $k$-reducible.
If there is no such $i$, it is called $k$-irreducible.
\end{definition}

\begin{remark}
$1$-irreducibility is just the ``classical'' irreducibility.
The anticommutator (Example~\ref{ex:hs.anticomm})
is ($1$-)irreducible but $2$-reducible.
\end{remark}

\begin{Example}
Let $p = 3 c y x b + 3 x b y x b + 2 c y x a x + c y b x b
       - c y a x b  - 2 x b y x a x + 4 x b y b x b  - 3 x b y a x b
       + 3 x a x y x b  - 3 b x b y x b + 6 a x b y x b + 2 x a x y x a x
       + x a x y b x b  - x a x y a x b  - 2 b x b y x a x  - b x b y b x b
       + b x b y a x b + 5 a x b y b x b - 4 a x b y a x b$
\cite[Section~8.2]{Camino2006a}
. The rank of $p$ is~$16$,
that is, the system matrix of a \emph{minimal} ALS has
dimension~16. The polynomial $p$ has 19~terms (monomials).
A \emph{minimal} (polynomial) ALS $\als{A}=(1,A,3)$
constructed iteratively starting with zero and
adding monomial by monomial (including minimization
by Algorithm~\ref{alg:hs.minals})
using the computer algebra system
\cite{FRICAS2019}
\ and the (experimental) implementation of the free field
\texttt{FDALG} ``Free Division ALGebra''
(available in Release~1.3.5)
results already in a very sparse ALS
showing that $p$ is 2-reducible for $i=7$
(in particular that $A$ has a upper right block of zeros
of size $7 \times 7$). The left number of multiplications
is $N_s(\als{A}) = 24$, the right number $N_t(\als{A}) = 22$.
For simplicity we show only the lower right
``subsystem'' of $\als{A}$ of size $9\times 9$
and hide the upper left
in the polynomials $p_1'$ and $p_2'$:
\begin{displaymath}
\begin{array}{r}
\\ 8 \\ 9 \\ 10 \\ 11 \\ 12 \\ 13 \\ 14 \\ 15 \\ 16
\end{array}
\begin{bmatrix}
1 & -p_1' & -p_2' & 0 & 0 & 0 & 0 & 0 & 0 & 0 \\
. & 1 & 0 & -3y & -4y & . & . & -y & . & . \\
. & 0 & 1 & -y & -y & . & . & -y & . & . \\
. & . & . & 1 & . & -5x & . & \frac{1}{3}a & . & . \\
. & . & . & . & 1 & 4x & . & -\frac{1}{3}b & . & . \\
. & . & . & . & . & 1 & -a & . & . & .  \\
. & . & . & . & . & . & 1 & . & . & -\frac{2}{3}x \\
. & . & . & . & . & . & . & 1 & -x & . \\
. & . & . & . & . & . & . & . & 1 & -b \\
. & . & . & . & . & . & . & . & . & 1 \\
\end{bmatrix}
s =
\begin{bmatrix}
 . \\ . \\ . \\ . \\ . \\ . \\ . \\ . \\ . \\ 3
\end{bmatrix}
\end{displaymath}
Only two elementary operations
(subtracting 3-times column~10 from column~14
and adding 2-times column~11 to column~14)
yield $\als{A}'$,
\begin{displaymath}
\begin{array}{r}
\\ 8 \\ 9 \\ 10 \\ 11 \\ 12 \\ 13 \\ 14 \\ 15 \\ 16
\end{array}
\begin{bmatrix}
1 & -p_1' & -p_2' & . & . & . & . & . & . & . \\
. & 1 & . & -3y & -4y & 0 & 0 & 0 & 0 & 0 \\
. & . & 1 & -y & -y   & 0 & 0 & 0 & 0 & 0 \\
. & . & . & 1 & 0 & -5x & . & \frac{1}{3}a & . & . \\
. & . & . & 0 & 1 & 4x & . & -\frac{1}{3}b & . & . \\
. & . & . & . & . & 1 & -a & . & . & .  \\
. & . & . & . & . & . & 1 & . & . & -\frac{2}{3}x \\
. & . & . & . & . & . & . & 1 & -x & . \\
. & . & . & . & . & . & . & . & 1 & -b \\
. & . & . & . & . & . & . & . & . & 1 \\
\end{bmatrix}
s =
\begin{bmatrix}
 . \\ . \\ . \\ . \\ . \\ . \\ . \\ . \\ . \\ 3
\end{bmatrix},
\end{displaymath}
revealing that $p$ is also 2-reducible for $i=9$
and (by Proposition~\ref{pro:hs.complexity})
$14 \le N(p) \le 20 = N_t(\als{A}')$.
Recursively, by using an ALS as a ``workbench'',
one can find the matrix factorization
$p = (X_1 X_2 X_3 + X_4) Y Z_1 Z_2 Z_3$ with
\begin{displaymath}
X_1 = 
\begin{bmatrix}
1+a & 1+b & x
\end{bmatrix},\quad
X_2 = 
\begin{bmatrix}
x & . & . \\
. & x & . \\
. & . & a 
\end{bmatrix},\quad
X_3 = 
\begin{bmatrix}
b & . \\
. & -b \\
. & x 
\end{bmatrix},\quad
X_4 = 
\begin{bmatrix}
. & c 
\end{bmatrix},
\end{displaymath}
\begin{displaymath}
Y =
\begin{bmatrix}
y & . \\
. & y 
\end{bmatrix},\quad
Z_1 = 
\begin{bmatrix}
6+5b-4a & . \\
3+b-a & 2x 
\end{bmatrix},\quad
Z_2 = 
\begin{bmatrix}
. & x \\
a & . 
\end{bmatrix},\quad
\text{and}\quad
Z_3 =
\begin{bmatrix}
x \\ b
\end{bmatrix},
\end{displaymath}
showing that $N(p) = 15$.
A ``block'' polynomial ALS for $p$ is
\begin{displaymath}
\begin{bmatrix}
1 & -X_1 & . & -X_4 & . & . & . & . \\
. & I_3 & -X_2 & . & . & . & . & . \\
. & . & I_3 & -X_3 & . & . & . & . \\
. & . & . & I_2 & -Y & . & . & . \\
. & . & . & . & I_2 & -Z_1 & . & . \\
. & . & . & . & . & I_2 & -Z_2 & . \\
. & . & . & . & . & . & I_2 & -Z_3 \\
. & . & . & . & . & . & . & 1
\end{bmatrix}
s = 
\begin{bmatrix}
. \\ . \\ . \\ . \\ . \\ . \\ . \\ 1
\end{bmatrix}.
\end{displaymath}
Notice that $p$ is (1-)\emph{irreducible}
because it is not possible to (admissibly) transform a
minimal ALS (for $p$) into one with an upper right block
of zeros of size $1 \times 14$, $2 \times 13$, \ldots, $13 \times 2$
or $14 \times 1$.
\end{Example}

\begin{remark}
To evaluate $p$ as (finite) formal sum,
97~matrix-matrix multiplications are necessary.
On the other hand $N(p)=15$, that is,
only 15~multiplications (starting from the top left)
are necessary using a Horner system.
\end{remark}

\begin{remark}
The matrix factorization $p=X Y Z$ shows immediately that
the \emph{Sylvester index} \cite{Konstantinov2000a}
\ (with respect to $y$) is~2, that is,
$p = p_1 y q_1 + p_2 y q_2$.
\end{remark}

\begin{Example}\label{ex:hs.mv}
Taking the polynomial matrix
\begin{displaymath}
X =
\begin{bmatrix}
(ae+b)x_{11}(2c-d) & (ae+b)x_{12}(c+d) - (ae+b)x_{11} d \\
\bigl( b x_{21} - (ae+b) x_{11}\bigr) (2c-d) &
  (ae+b)\bigl(x_{11}d - x_{12}(c+d) \bigr) - b x_{21} d + b x_{22}(c+d)
\end{bmatrix}
\end{displaymath}
from \cite{deOliveira2012a}
, by multiplying a row vector from the left
respectively a column vector from the right
(both with generic variables), we can consider it
as a polynomial:
\begin{displaymath}
p = 
\begin{bmatrix}
y_1 & y_2 
\end{bmatrix}
X
\begin{bmatrix}
z_1 \\ z_2
\end{bmatrix}.
\end{displaymath}
A \emph{minimal} ALS for $p$ is
\begin{displaymath}
\left[ \begin{array}{ccccccccccc}
\tabstrut
1 & -y_1 & -y_2 & .  & .  & .  & .  & .  & .  & . & . \\
  &  1   & .    & -a  & -b & 0  & .  & .  & .  & . & . \\
  &      &  1   & a & b  & -b & .  & .  & .  & . & . \\
  &      &      & 1  & -e  & 0  & .  & .  & .  & . & . \\
  &      &      &    & 1  & .  & -x_{11} & -x_{12} & .  & . & . \\
  &      &      &    &    & 1  & -x_{21} & -x_{22} & .  & . & . \\
  &      &      &    &    &    & 1  & .  & d-2c & d & . \\
  &      &      &    &    &    &    & 1  & 0  & -d-c & . \\
  &      &      &    &    &    &    &    & 1  & . & -z_1 \\
  &      &      &    &    &    &    &    &    & 1 & -z_2 \\
  &      &      &    &    &    &    &    &    &   & 1 
\end{array}  \right]
s = 
\left[ \begin{array}{c}
\tabstrut
. \\ . \\ . \\ . \\ . \\ . \\ . \\ . \\ . \\ . \\ 1 
\end{array} \right]
\end{displaymath}
which translates directly ---the second matrix
appears in a linearized form in the system matrix---
into the matrix factorization
\begin{displaymath}
p =
\begin{bmatrix}
y_1 & y_2
\end{bmatrix}
\underbrace{%
\begin{bmatrix}
ae+b & 0 \\
-ae-b & b
\end{bmatrix}
\begin{bmatrix}
x_{11} & x_{12} \\
x_{21} & x_{22}
\end{bmatrix}
\begin{bmatrix}
2c-d & -d \\
0 & c+d
\end{bmatrix}}_{=X}
\begin{bmatrix}
z_1 \\ z_2
\end{bmatrix}.
\end{displaymath}
Notice that we already know that there
must exist an upper right block of zeros of size $1 \times 8$
and one of size $8 \times 1$.
The tricky part here is that one ``matrix-factor'' of $X$ has
\emph{non-linear} entries.
In this case one could substitute $ae$ by
a new symbol/letter. Then the ALS would have
dimension~10 (recall that $\rank p = 11$).
\end{Example}

\section{Epilogue}\label{sec:hs.epi}

Given a (non-trivial) nc polynomial $p$ with $\rank p = n \ge 2$,
a Horner system is the most sparse polynomial admissible linear system
for $p$ with respect to matrix-matrix multiplications.
Unfortunately, finding Horner systems in general is
very difficult since one needs to solve non-linear
(polynomial) systems of equations.
However, in concrete situations,
one can get good ``approximations'' quite easily
by starting with a \emph{minimal} polynomial ALS $\als{A}$
(constructed by Algorithm~\ref{alg:hs.minals})
and trying to find non-trivial factorizations
by linear techniques 
\cite[Remark~5.8]{Schrempf2018a2}
, that is, using ``non-overlapping'' row and column
transformations, yielding some ALS $\als{A}'$.
From Proposition~\ref{pro:hs.complexity} we have bounds for
the minimal number of multiplications (for the evaluation of $p$),
namely,
\begin{displaymath}
n-2 \le N(p) \le N(\als{A}') \le \tsfrac{1}{2}(n-1)(n-2).
\end{displaymath}
If $n-2 \ll N(\als{A}') \le \tsfrac{1}{2}(n-1)(n-2)$
it might be worth to check systematically for
$k$-reducibility of $p$ for $k=1,2,\ldots,n-2$
(Definition~\ref{def:hs.mred}).
This can be done recursively, using already known
factorizations in $\als{A}'$,
yielding some $\als{A}''$.
And finally one can minimize the number of non-scalar
entries in the ``matrix factors'' of $\als{A}''$
by looking for appropriate (scalar) invertible matrices,
for example, $P \in \field{K}^{k_1 \times k_1}$
and $Q \in \field{K}^{k_2 \times k_2}$,
\begin{displaymath}
p = \underbrace{\tabstrut X P}_{=:X'} \,
    \underbrace{\tabstrut P\inv Y Q}_{=:Y'} \,
    \underbrace{\tabstrut Q\inv Z}_{=:Z'}.
\end{displaymath}
In general, this is very difficult, since already
for a special case, namely \emph{pivot block refinement}
\cite[Section~3]{Schrempf2018a2}
, one needs to solve non-linear (polynomial) systems of equations.
(See also 
\cite[Section~4.4]{Schrempf2018c}
.)
If there is no additional structure one can use,
this is comparable to testing ``fullness'' of matrices
\cite[Chapter~3]{Janko2018a}
, so one cannot expect a brute-force approach
to work practically for $k_1 = k_2 = k > 5$.

\medskip
For ``totally'' irreducible elements of rank~$n$ one would need to
check for all ``sparsity patterns'' with respect to evaluation by
the left and by the right family.
For an ALS $\als{A} = (1,A,\lambda)$
of dimension~$n$ there are $\bar{n} := (n-2)(n-1)/2-1$ entries
to test for $n-3$, $n-2$, \ldots, $\bar{n}-1$ non-scalar entries.
For illustration we take $n=5$:
\begin{displaymath}
\underbrace{%
\begin{bmatrix}
\alpha_{1,1} & \alpha_{1,2} & \alpha_{1,3} & \alpha_{1,4} & 0 \\
\alpha_{2,1} & \alpha_{2,2} & \alpha_{2,3} & \alpha_{2,4} & 0 \\
\alpha_{3,1} & \alpha_{3,2} & \alpha_{3,3} & \alpha_{3,4} & 0 \\
\alpha_{4,1} & \alpha_{4,2} & \alpha_{4,3} & \alpha_{4,4} & 0 \\
\alpha_{5,1} & \alpha_{5,2} & \alpha_{5,3} & \alpha_{5,4} & 1 \\
\end{bmatrix}}_{=P \in \field{K}^{n\times n},\, \det P \neq 0}
\underbrace{%
\begin{bmatrix}
1 & * & ? & ? & * \\
. & 1 & ? & ? & * \\
. & . & 1 & ? & * \\
. & . & . & 1 & * \\
. & . & . & . & 1
\end{bmatrix}}_{=A}
\underbrace{%
\begin{bmatrix}
1 & 0 & 0 & 0 & 0 \\
\beta_{2,1} & \beta_{2,2} & \beta_{2,3} & \beta_{2,4} & \beta_{2,5} \\
\beta_{3,1} & \beta_{3,2} & \beta_{3,3} & \beta_{3,4} & \beta_{3,5} \\
\beta_{4,1} & \beta_{4,2} & \beta_{4,3} & \beta_{4,4} & \beta_{4,5} \\
\beta_{5,1} & \beta_{5,2} & \beta_{5,3} & \beta_{5,4} & \beta_{5,5} \\
\end{bmatrix}}_{=Q \in \field{K}^{n\times n},\, \det Q \neq 0}
\end{displaymath}
This would yield $10+10+5 = 25$ possibilities
already for $n=5$, each inducing a \emph{non}-linear (polynomial)
system of equations with $2n(n-1)$ \emph{commuting} unknowns.
So in general this will not be very useful, in particular
because ---compared to the factorization---
one does not get any ``structural'' insight.
However, heuristic approaches for increasing (non-scalar) sparsity,
that is, ``approximating'' Horner systems,
by ``local'' row and column transformations depending
on the existing structure in the system matrix
(respectively the coefficient matrices $A_1,\ldots,A_d$
of the linear matrix pencil $A$) might be possible
and could be very helpful.

\begin{Remark}
The evaluation of a polynomial $p$ given by the
minimal polynomial ALS $\als{A}=(1,A,\lambda)$ of dimension~$n$
by non-square matrices (of appropriate size)
yields a natural block structure, entry $(i,j)$ in
the system matrix $A$ has size $m_i \times m_j$
with $m_1 = m_n = 1$.
In this case one can get a priority for checking
in particular \hbox{$1$-reducibility} to avoid huge inner dimensions,
for example in $p = x_1 x_2 x_3 x_4$ with
row vectors $x_1,x_3$ and column vectors $x_2,x_4$.
Here the factorization $p = (x_1 x_2)(x_3 x_4)$ is
of higher importance with respect to the evaluation.
\end{Remark}

For further references with respect to the application of nc polynomials
(and appropriate software for symbolic computations) we
refer to \cite{Camino2006a}
\ and \cite{deOliveira2012a}
. There is a close connection to
optimization respectively 
\emph{semidefinite programming} (SDP)
\cite{Blekherman2013a}
, in particular visible in
\cite{Cafuta2011a}
.
As a starting point for the evaluation of
\emph{commutative} multivariate polynomials
one could take
\cite{Czekansky2015a}
. If one has huge arrays of \emph{commutative} polynomials
to evaluate it might be possible to use non-commutativity
(in terms of matrix-matrix multiplication) like in
\cite{Dym2013a}
.

\section*{Acknowledgement}

This work has been partially supported by research subsidies granted by
the government of Upper Austria
(research project ``Methodenentwicklung für Energie\-fluss\-opti\-mie\-rung'').

\bibliographystyle{alpha}
\bibliography{doku}

\begin{thebibliography}{{Sch}18b}

\bibitem[Ami66]{Amitsur1966a}
S.~A. Amitsur.
\newblock Rational identities and applications to algebra and geometry.
\newblock {\em J. Algebra}, 3:304--359, 1966.

\bibitem[BGKR08]{Bart2008a}
H.~Bart, I.~Gohberg, M.~A. Kaashoek, and A.~C.~M. Ran.
\newblock {\em Factorization of matrix and operator functions: the state space
  method}, volume 178 of {\em Operator Theory: Advances and Applications}.
\newblock Birkh\"auser Verlag, Basel, 2008.
\newblock Linear Operators and Linear Systems.

\bibitem[BPT13]{Blekherman2013a}
G.~Blekherman, P.~A. Parrilo, and R.~R. Thomas, editors.
\newblock {\em Semidefinite optimization and convex algebraic geometry},
  volume~13 of {\em MOS-SIAM Series on Optimization}.
\newblock Society for Industrial and Applied Mathematics (SIAM), Philadelphia,
  PA; Mathematical Optimization Society, Philadelphia, PA, 2013.

\bibitem[BR11]{Berstel2011a}
J.~Berstel and C.~Reutenauer.
\newblock {\em Noncommutative rational series with applications}, volume 137 of
  {\em Encyclopedia of Mathematics and its Applications}.
\newblock Cambridge University Press, Cambridge, 2011.

\bibitem[BS15]{Baeth2015a}
N.~R. Baeth and D.~Smertnig.
\newblock Factorization theory: from commutative to noncommutative settings.
\newblock {\em J. Algebra}, 441:475--551, 2015.

\bibitem[CHS06]{Camino2006a}
J.~F. Camino, J.~W. Helton, and R.~E. Skelton.
\newblock Solving matrix inequalities whose unknowns are matrices.
\newblock {\em SIAM J. Optim.}, 17(1):1--36, 2006.

\bibitem[CKP11]{Cafuta2011a}
K.~Cafuta, I.~Klep, and J.~Povh.
\newblock N{CSOS}tools: a computer algebra system for symbolic and numerical
  computation with noncommutative polynomials.
\newblock {\em Optim. Methods Softw.}, 26(3):363--380, 2011.

\bibitem[Coh63]{Cohn1963b}
P.~M. Cohn.
\newblock Noncommutative unique factorization domains.
\newblock {\em Trans. Amer. Math. Soc.}, 109:313--331, 1963.

\bibitem[Coh74]{Cohn1974a}
P.~M. Cohn.
\newblock Progress in free associative algebras.
\newblock {\em Israel J. Math.}, 19:109--151, 1974.

\bibitem[Coh75]{Cohn1975b}
P.~M. Cohn.
\newblock Algebra and language theory.
\newblock {\em Bull. London Math. Soc.}, 7:1--29, 1975.

\bibitem[Coh95]{Cohn1995a}
P.~M. Cohn.
\newblock {\em Skew fields}, volume~57 of {\em Encyclopedia of Mathematics and
  its Applications}.
\newblock Cambridge University Press, Cambridge, 1995.
\newblock Theory of general division rings.

\bibitem[Coh06]{Cohn2006a}
P.~M. Cohn.
\newblock {\em Free ideal rings and localization in general rings}, volume~3 of
  {\em New Mathematical Monographs}.
\newblock Cambridge University Press, Cambridge, 2006.

\bibitem[CR94]{Cohn1994a}
P.~M. Cohn and C.~Reutenauer.
\newblock A normal form in free fields.
\newblock {\em Canad. J. Math.}, 46(3):517--531, 1994.

\bibitem[CR99]{Cohn1999a}
P.~M. Cohn and C.~Reutenauer.
\newblock On the construction of the free field.
\newblock {\em Internat. J. Algebra Comput.}, 9(3-4):307--323, 1999.
\newblock Dedicated to the memory of Marcel-Paul Sch{\"u}tzenberger.

\bibitem[CS15]{Czekansky2015a}
J.~Czekansky and T.~Sauer.
\newblock The multivariate {H}orner scheme revisited.
\newblock {\em BIT}, 55(4):1043--1056, 2015.

\bibitem[DDHK07]{Demmel2007a}
J.~Demmel, I.~Dumitriu, O.~Holtz, and R.~Kleinberg.
\newblock Fast matrix multiplication is stable.
\newblock {\em Numer. Math.}, 106(2):199--224, 2007.

\bibitem[DHM13]{Dym2013a}
H.~Dym, J.~W. Helton, and C.~Meier.
\newblock Non-commutative representations of families of {$k^2$} commutative
  polynomials in {$2k^2$} commuting variables.
\newblock {\em Internat. J. Algebra Comput.}, 23(7):1685--1753, 2013.

\bibitem[dO12]{deOliveira2012a}
M.~de~Oliveira.
\newblock Simplification of symbolic polynomials on non-commutative variables.
\newblock {\em Linear Algebra Appl.}, 437(7):1734--1748, 2012.

\bibitem[Fri19]{FRICAS2019}
{\em \textsc{FriCAS} Computer Algebra System}, 2019.
\newblock W.~Hebisch,\\ \url{http://axiom-wiki.newsynthesis.org/FrontPage}.

\bibitem[{Gan}66]{Gantmacher1966a}
F.~R. {Gantmacher}.
\newblock {\em {Theorie der Matrizen. 2., erg. Aufl.}}
\newblock {Moskau: Verlag 'Nauka'. Hauptredaktion f\"ur
  physikalisch-mathematische Literatur. 576 S.}, 1966.

\bibitem[Hig08]{Higham2008a}
N.~J. Higham.
\newblock {\em Functions of matrices}.
\newblock Society for Industrial and Applied Mathematics (SIAM), Philadelphia,
  PA, 2008.
\newblock Theory and computation.

\bibitem[Jan18]{Janko2018a}
B.~Janko.
\newblock {F}actorization of non-commutative {P}olynomials and {T}esting
  {F}ullness of {M}atrices.
\newblock Diplomarbeit, TU Graz, 2018.

\bibitem[KMP00]{Konstantinov2000a}
M.~Konstantinov, V.~Mehrmann, and P.~Petkov.
\newblock On properties of {S}ylvester and {L}yapunov operators.
\newblock {\em Linear Algebra Appl.}, 312(1-3):35--71, 2000.

\bibitem[{Sch}17]{Schrempf2017c2}
K.~{Schrempf}.
\newblock A factorization theory for some free fields.
\newblock {\em arXiv e-prints}, December 2017.
\newblock Version 2, March 2019, \url{http://arxiv.org/pdf/1712.09102}.

\bibitem[{Sch}18a]{Schrempf2018a2}
K.~{Schrempf}.
\newblock {A Standard Form in (some) Free Fields: How to construct Minimal
  Linear Representations}.
\newblock {\em arXiv e-prints}, March 2018.
\newblock Version 2, March 2019, \url{http://arxiv.org/pdf/1803.10627}.

\bibitem[{Sch}18b]{Schrempf2018c}
K.~{Schrempf}.
\newblock Free fractions: An invitation to (applied) free fields.
\newblock {\em ArXiv e-prints}, September 2018.

\bibitem[Sch18c]{Schrempf2017a9}
K.~Schrempf.
\newblock Linearizing the word problem in (some) free fields.
\newblock {\em Internat. J. Algebra Comput.}, 28(7):1209--1230, 2018.

\bibitem[Sch19]{Schrempf2017b9}
K.~Schrempf.
\newblock On the factorization of non-commutative polynomials (in free
  associative algebras).
\newblock {\em Journal of Symbolic Computation}, 94:126--148, 2019.

\bibitem[Sim16]{Simoncini2016a}
V.~Simoncini.
\newblock Computational methods for linear matrix equations.
\newblock {\em SIAM Rev.}, 58(3):377--441, 2016.

\bibitem[SS78]{Salomaa1978a}
A.~Salomaa and M.~Soittola.
\newblock {\em Automata-theoretic aspects of formal power series}.
\newblock Springer-Verlag, New York-Heidelberg, 1978.
\newblock Texts and Monographs in Computer Science.

\bibitem[Str69]{Strassen1969a}
V.~Strassen.
\newblock Gaussian elimination is not optimal.
\newblock {\em Numer. Math.}, 13:354--356, 1969.

\bibitem[TOT14]{Tajima2014a}
S.~Tajima, K.~Ohara, and A.~Terui.
\newblock An extension and efficient calculation of the {H}orner's rule for
  matrices.
\newblock In {\em Mathematical software---{ICMS} 2014}, volume 8592 of {\em
  Lecture Notes in Comput. Sci.}, pages 346--351. Springer, Heidelberg, 2014.

\end{thebibliography}
\addcontentsline{toc}{section}{Bibliography}

\end{document}